\documentclass[12pt]{article}
\usepackage{bbm}
\usepackage{graphicx}
\graphicspath{ {./images/} }
\usepackage{wrapfig}
\usepackage[mathscr]{euscript}
\usepackage{subcaption}
\usepackage{hyperref}
\usepackage{epstopdf}
\usepackage{enumerate}
\usepackage{longtable}
\usepackage{float}
\usepackage{amsmath,amsfonts,amssymb,amsthm,epsfig,epstopdf,titling,url,array,multirow}
\usepackage{color}
\usepackage{framed}
\usepackage[utf8]{inputenc}
\usepackage[english]{babel}
\usepackage{xparse}
\usepackage[margin=2.5cm]{geometry}

\NewDocumentCommand{\INTERVALINNARDS}{ m m }{
	#1 {,} #2
}
\NewDocumentCommand{\interval}{ s m >{\SplitArgument{1}{,}}m m o }
{
	\IfBooleanTF{#1}{
		\left#2 \INTERVALINNARDS #3 \right#4
	}{
		\IfValueTF{#5}{
			#5{#2} \INTERVALINNARDS #3 #5{#4}
		}{
			#2 \INTERVALINNARDS #3 #4
		}
	}
}

\newtheorem{thm}{Theorem}[section]
\newtheorem{maintheorem}{Theorem}

\newtheorem{cor}{Corollary}[section]
\newtheorem{lem}{Lemma}[section]
\newtheorem{prop}{Proposition}[section]

\newtheorem{rem}{Remark}[section]
\newtheorem{obs}[thm]{Observation}

\numberwithin{equation}{section}

\date{}
\begin{document}
	\title{Relaxed Newton’s Method as a Family of Root-finding Methods: Dynamics and Convergence}
	\author{Soumen Pal\\Department of Mathematics, Indian Institute of Technology Madras,\\ India\\ Email: soumen.pal.new@gmail.com}
	\date{}
	\maketitle
\begin{abstract}
Relaxed Newton's method is a one-parameter family of root-finding methods that generalizes the classical Newton's method. When viewed as a rational map on the Riemann sphere, this family exhibits rich and subtle global dynamics that depend both on the underlying polynomial and on the relaxation parameter. In this paper, we investigate the complex dynamical behavior of relaxed Newton maps associated with complex polynomials.

We first characterize rational maps that arise as relaxed Newton maps in terms of the multipliers of their fixed points. Our main results identify several explicit classes of polynomials for which relaxed Newton's method is \textit{convergent} for all parameters $h$, in the sense that the Fatou set consists precisely of the basins of attraction of the roots. We further show that this favorable behavior does not hold in general: for any fixed relaxation parameter $h\in \{z:|z-1|<1\}$, there exists a generic cubic polynomial for which the relaxed Newton map fails to be convergent. Additional results include a complete characterization of when the Julia set is a straight line, an analysis of symmetry groups arising from rotational invariance, and sufficient conditions ensuring that all immediate basins of attraction are unbounded.
\end{abstract}
\textit{Keywords:} Relaxed Newton's method; Fatou and Julia sets; Convergence; Symmetry in Julia set.\\
	AMS Subject Classification: 37F10, 65H05
\section{Introduction and Main Results}
Root-finding methods have long been studied from numerical and dynamical
perspectives. When such a method is applied to a complex polynomial, it naturally defines a rational map on the Riemann sphere. The global dynamical properties of this rational map encode detailed information about the convergence behavior of the method, transforming numerical questions into problems in complex dynamics.

The classical Newton's method applied to polynomials is the most prominent example of this correspondence. Its associated Newton map has been studied extensively over the past several decades, leading to deep insights into the structure of Julia sets, the geometry of basins of attraction, and the occurrence of extraneous dynamical phenomena such as attracting cycles not associated with roots; see, for example, \cite{HSS2003,Lei1997,Felex1989}. These investigations have also been extended beyond polynomials to rational and entire functions; see \cite{BFJK2018,NPP2026,RS2007} and the references therein.

Relaxed Newton's method provides a natural one-parameter generalization of the classical Newton's method. Given a polynomial $p$, it is defined by
\begin{equation}\label{Formula_ReNew}
N_{h,p}(z)=z-h\,\frac{p(z)}{p'(z)},
\end{equation}
where $h$ is called the relaxation parameter. From a numerical perspective, this relaxation parameter $h$ is introduced to regulate convergence speed and stability. From a dynamical viewpoint, varying $h$ produces a family of rational maps whose fixed points correspond precisely to the roots of $p$ and the point at infinity, but whose global behavior may differ substantially from that of the classical Newton map ($h=1$).

The dynamics of related higher-order root-finding methods, including K\"onig's, Halley's, and Chebyshev's methods, have attracted considerable attention; see Buff and Henriksen \cite{BH2003}, the recent work of Liu et al. \cite{LPP2026}, and Nayak and Pal \cite{Sym_dyn}. Certain aspects of relaxed Newton's method have also been investigated; see, for example, \c{C}ilingir \cite{Cilingir2006} and Plaza and Romero \cite{PlazaRomero2011}. However, despite its importance in numerical analysis, relaxed Newton's method has not yet been systematically studied from the perspective of global complex dynamics, to the best of our knowledge.

A central problem in the theory of Newton-type methods is that of \emph{convergence}. From the numerical viewpoint, this concerns whether the iterates of a given root-finding method converge to a root for almost every initial point. From the perspective of complex dynamics, convergence corresponds to the situation in which the Fatou set of the associated rational map consists precisely of the basins of attraction of the roots of the polynomial.

For the classical Newton's method, convergence fails in most cases due to the presence of extraneous attracting or parabolic cycles. In the setting of relaxed Newton's method, the introduction of a relaxation parameter $h$ leads to a family of rational maps whose global dynamical behavior may vary significantly with both the polynomial and the parameter.

Rather than seeking convergence for particular values of $h$, the focus of this paper is on identifying classes of polynomials for which relaxed Newton's method is \emph{convergent for all values of the parameter $h$}. In other words, we aim to determine intrinsic algebraic and geometric conditions on the polynomial that guarantee parameter-independent convergence of the associated relaxed Newton maps.

In this paper, we address this question from the viewpoint of complex dynamics.
We first establish a characterization of relaxed Newton maps in terms of the
multipliers of their fixed points, extending ideas that appear implicitly in earlier
works on Newton-type methods. This characterization allows us to identify relaxed
Newton maps among general rational functions with prescribed fixed-point behavior.

Our main contribution is the identification of explicit and natural classes of polynomials for which relaxed Newton's method is \emph{$h$-convergent}, meaning that it is convergent for every choice of the relaxation parameter $h$. These include polynomials with exactly two distinct roots, unicritical polynomials, and families of composite polynomials of the form $(az+b)^m\big((az+b)^n+c\big)$. In each case, the proofs combine critical orbit analysis, symmetry considerations, and classical results from rational dynamics. The precise statement is the following.
\begin{maintheorem}\label{Conv_cls}
For each of the following classes of polynomials, relaxed Newton's method is $h$-convergent:
\begin{enumerate}[(i)]
	\item Polynomials having exactly two roots;
	\item Unicritical polynomials;
	\item Polynomials of the form  $(a z+b)^{m}\left[(a z+b)^{n}+c\right]$, where $a, b, c \in \mathbb{C}$, $a, c \neq 0$ and $m, n \in \mathbb{N}$ with $n\geq 2$. 
\end{enumerate}
\end{maintheorem}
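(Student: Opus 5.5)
Throughout, the convergence criterion is the standard one for rational maps. Every Fatou component of $N_{h,p}$ is eventually periodic (Sullivan). Every attracting or parabolic cycle attracts a critical point. Every Siegel disk or Herman ring boundary lies in the closure of the forward orbits of critical points. Hence, to show $h$-convergence, I will show that for every admissible $h$ (that is, $|h-1|<1$, so that each simple root is attracting with multiplier $1-h$, a root of multiplicity $k$ has multiplier $1-h/k$, and $\infty$ is repelling with multiplier $d/(d-h)$), every critical point of $N_{h,p}$ which is not a root lies in the basin of some root, or eventually maps onto a root. This excludes all extraneous attracting, parabolic and rotation domains, since no free critical orbit is left to support them. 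The critical points of $N_{h,p}$ other than roots are the zeros of $(1-h)p'^2+hpp''$. This equation is the main computational object in each case.

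\textbf{Case (i).} Conjugate by an affine map so that $p(z)=z^{m}(z-1)^{n}$. Write $N=N_{h,p}$. I will compute $N$ explicitly and exploit the fact that $p'/p=\frac{m}{z}+\frac{n}{z-1}$, which makes $N$ a map of degree $2$ (or low degree) after cancelling common factors. A degree-$2$ rational map has two critical points. Since $0$ and $1$ are attracting fixed points, each immediate basin must contain a critical point, so both critical points are captured and convergence follows. I will verify the degree count carefully: $N(z)=z-h\frac{z(z-1)}{(m+n)z-m}$ is indeed quadratic.

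\textbf{Case (ii).} Unicritical means $p(z)=a(z-b)^d+c$. Conjugate to $p(z)=z^d-1$ (the case $c=0$ gives a single root, for which $N$ is a Möbius map and the claim is trivial). Then $N(z)=z-h\frac{z^d-1}{dz^{d-1}}$ commutes with $z\mapsto\omega z$ for $\omega^d=1$. Its free critical points are the solutions of $z^d=\frac{h(d-1)}{d-h}\cdot(\text{const})$; these form a single rotation orbit, plus $0$, which maps to $\infty$. Each of the $d$ roots has an immediate basin containing a critical point. There are exactly $d$ free critical points, permuted by the symmetry, so each lies in one immediate basin. This pigeonhole-plus-symmetry argument is the key step. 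I need to check that distinct roots cannot share a critical point, which holds since the immediate basins are disjoint.

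\textbf{Case (iii).} After conjugating, take $p(z)=z^m(z^n+c)$, and by scaling assume $c=-1$. The symmetry group is again $z\mapsto\omega z$ with $\omega^n=1$. The root $0$ has multiplicity $m$ and is attracting. Computing $(1-h)p'^2+hpp''$ gives $z^{2m-2}$ times a polynomial in $w=z^n$ of degree $2$. Hence the free critical points form at most two rotation orbits of size $n$, together with $0$. The $n$ simple roots each capture one critical point, accounting for one orbit by symmetry. The remaining orbit must be shown to be attracted to $0$.

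This last step is the main obstacle. I would handle it by passing to the semiconjugate map $g(w)=N(z)^n$ with $w=z^n$, a low-degree rational map in $w$ with attracting fixed points $w=0$ and $w=1$. For $g$, the number of critical points is small enough that each attracting fixed point must capture one of them. The critical point of $g$ near $w=0$ goes to $0$, and the image of the second $z$-orbit is the other critical point of $g$. Counting gives exactly enough, which forces $h$-convergence. I must check that $g$ is quadratic or cubic with $\infty$ repelling, and handle the degenerate values of $h$ where the quadratic in $w$ has a double root.
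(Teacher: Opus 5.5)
Your cases (i) and (ii) match the paper, and so does your overall reduction: if every non-root critical point is captured by a root basin, the map is convergent. The gap is the last step of case (iii), and the semiconjugacy you propose does not close it.

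\textbf{Why the semiconjugacy fails.} With $\pi(z)=z^n$ you get $g(w)=\frac{w\,[(m+n-h)w-(m-h)]^{n}}{[(m+n)w-m]^{n}}$. This has degree $n+1$, so it is not quadratic, and it is cubic only when $n=2$. Its critical points are:
\begin{enumerate}[(a)]
\item the images $w_1,w_2$ of the two critical orbits of $N$;
\item a critical point of multiplicity $n-1$ at $w_0=\frac{m-h}{m+n-h}$ (for $h\neq m$), with $g(w_0)=0$;
\item a critical pole of multiplicity $n-1$ at $\frac{m}{m+n}$.
\end{enumerate}
In addition, $g$ has $n-1$ fixed points besides $0,1,\infty$. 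They come from points with $N(z)=\omega z$, $\omega^n=1$, $\omega\neq1$, that is, from rotation-symmetric cycles of $N$. Since $g(w_0)=0$, Fatou's theorem for $g$ is satisfied if $w_0$ lies in the immediate basin of $0$. In that case $w_2$ could be attracted to one of those extra fixed points, and the $w$-plane count cannot rule this out. So the count leaves one critical point free rather than giving ``exactly enough''. Also, Fatou's theorem does not depend on there being few critical points, as your sentence suggests.

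\textbf{The missing step.} Apply the principle you already used for the simple roots to the attracting fixed point $0$ of $N$ itself, together with symmetry.
\begin{enumerate}[(a)]
\item Since $N(\omega z)=\omega N(z)$ and $\omega\cdot0=0$, the immediate basin satisfies $\omega\mathcal{A}_0=\mathcal{A}_0$.
\item Let $S_1$ be the rotation orbit of critical points captured by $\mathcal{A}_1\cup\dots\cup\mathcal{A}_n$, and $S_2$ the other orbit. By Fatou's theorem $\mathcal{A}_0$ contains a critical point of $N$.
\item That critical point is not in $S_1$, so it lies in $S_2$, and rotation invariance gives $S_2\subset\mathcal{A}_0$.
\item If $h=m$, which for $|h-1|<1$ forces $m=h=1$, the critical point in $\mathcal{A}_0$ is $0$ itself.
\end{enumerate}
This is exactly the paper's argument. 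It also removes the double-root case you planned to treat separately: the $n+1$ pairwise disjoint immediate basins need $n+1$ distinct critical points, so the two orbits are automatically distinct.

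\textbf{Minor correction.} $0$ is not a critical point of $N$ unless $h=m$. The factor $z^{2m-2}$ in $(1-h)p'^2+hpp''$ cancels against $p'^2$, and $N'(0)=1-h/m$.
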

A root-finding method is said to be \textit{generally convergent} for the space of polynomials $\mathcal{P}_d$ of a fixed degree $d\geq 2$ if the method is convergent for every polynomial belonging to a dense subset of $\mathcal{P}_d$. The classical Newton's method is generally convergent for the space of quadratic polynomials. Theorem~\ref{Conv_cls} shows that relaxed Newton's method is $h$-convergent for all quadratic polynomials; in other words, it is generally convergent, independently of the choice of $h$. For a cubic polynomial $p$, the same conclusion holds whenever $p$ has exactly two roots, is unicritical, or is of the form $p(z)=(a z+b)\left[(a z+b)^{2}+c\right]$ where $a, c \neq 0$. However, this behavior does not persist in general. The following result establishes this fact.
\begin{maintheorem}\label{Cubic_nconv}
For any given $h\in \mathbb{D}$, there is a generic, non-unicritical polynomial $p$ such that $N_{h,p}$ is not convergent.
\end{maintheorem}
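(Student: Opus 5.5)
The plan is to build, for the given $h$, an explicit monic cubic $p$ whose relaxed Newton map $N_{h,p}$ has a \emph{superattracting $2$-cycle} $\{0,1\}$ that contains no root of $p$. Its immediate basin is then a Fatou component that lies in no root basin, so $N_{h,p}$ is not convergent. Afterwards I would perturb $p$ to make it generic and non-unicritical, since an attracting cycle persists under small perturbations.

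\textbf{Critical points and conditions.} From \eqref{Formula_ReNew},
\[
N_{h,p}'(z)=\frac{(1-h)\,p'(z)^2+h\,p(z)p''(z)}{p'(z)^2},
\]
so the free critical points are the zeros of $(1-h)p'^2+hpp''$. Write $p(z)=z^3+\alpha z^2+\beta z+\gamma$. We impose three conditions:
\begin{itemize}
\item $0$ is critical: $(1-h)\beta^2+2h\alpha\gamma=0$;
\item $N_{h,p}(0)=1$: $-h\gamma/\beta=1$, i.e.\ $\gamma=-\beta/h$;
\item $N_{h,p}(1)=0$: $h\,p(1)=p'(1)$, i.e.\ $h(1+\alpha+\beta+\gamma)=3+2\alpha+\beta$.
\end{itemize}
Substituting $\gamma=-\beta/h$ into the first condition gives $\beta\bigl((1-h)\beta-2\alpha\bigr)=0$. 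The branch $\beta=0$ forces $\gamma=0$, so $0$ would be a root; we discard it and take $\alpha=(1-h)\beta/2$.

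\textbf{Solving for $\beta$.} The last condition is then linear in $\beta$:
\[
\beta\Bigl(\tfrac{h(1-h)}{2}+h-1-(1-h)-1\Bigr)=3-h ,
\]
after collecting terms and cancelling the contribution of $h\gamma=-\beta$. Let $A(h)$ denote the coefficient of $\beta$. Whenever $A(h)\neq0$ this gives a unique $\beta=(3-h)/A(h)$, which is nonzero because $3-h\neq0$ for $h\in\mathbb D$. Hence $\gamma=p(0)\neq0$. Also $p(1)=p'(1)/h$, so $p(1)\neq0$ as long as $1$ is not a critical point of $p$. This is a nondegeneracy condition to check, along with $p'(0)=\beta\neq0$, which makes $N_{h,p}(0)$ finite.

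\textbf{Why the cycle gives non-convergence.} The cycle $\{0,1\}$ is superattracting, and neither point is a root or $\infty$. So the Fatou component containing $0$ does not belong to any root basin. This holds even when roots are attracting, i.e.\ $|1-h|<1$. I expect the main obstacle to be the algebra here: verifying $A(h)\neq0$ and the other nondegeneracies for every $h$ in the parameter disk. If $A(h)$ or one of the nondegeneracy conditions fails for some exceptional $h$, I would first try a different normalization of the cycle, taking the points to be $0$ and $t$ with $t$ chosen off a finite bad set.

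\textbf{Genericity.} The constructed $p$ might have a repeated root or be unicritical. Unicriticality would put $p$ in Theorem~\ref{Conv_cls}(ii), which is impossible since a convergent map has no such cycle; so non-unicriticality is essentially automatic. To make $p$ generic, note that an attracting periodic cycle with multiplier $0$ persists, with multiplier of small modulus, under small perturbations of the coefficients. Generic cubics with three distinct roots and two distinct critical points are dense. So a sufficiently small perturbation $\tilde p$ is generic, non-unicritical, and still has an attracting $2$-cycle near $\{0,1\}$ disjoint from the roots. Therefore $N_{h,\tilde p}$ is not convergent, proving Theorem~\ref{Cubic_nconv}.
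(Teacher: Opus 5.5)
Your proposal is correct. The core idea, a superattracting $2$-cycle through a free critical point, is the paper's, but you carry it out with a different normalization.

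\textbf{The two routes.} The paper normalizes the \emph{polynomial} to $z^3-3z+a$ and computes $N_{h,p}^2$. It then eliminates $a$ between the critical-point equation and a degree-six period-two equation. This gives $\xi^2=(h-1)^2/(h^2-8h+13)$ and a formula for $a(h)$ that requires a square-root branch. You normalize the \emph{cycle} to $\{0,1\}$ with $0$ critical. No iterate is needed, and the system reduces to one factorable quadratic plus linear equations.

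\textbf{Your flagged obstacle.} It closes at once. You get $A(h)=-\tfrac{(h-2)(h-3)}{2}$, hence
$\beta=\tfrac{2}{h-2}$, $\alpha=\tfrac{1-h}{h-2}$, $\gamma=-\tfrac{2}{h(h-2)}$, with $p(1)=1/h$ and $p'(1)=1$. All of these are nondegenerate for $|h-1|<1$. Two checks:
\begin{itemize}
\item At $h=1$ your $p$ is the classical $z^3-2z+2$.
\item At $h=\tfrac12$, normalizing via Observation~\ref{Use_SP}(3) gives exactly the paper's example $z^3-3z+1834/(37\sqrt{37})$.
\end{itemize}

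\textbf{Your perturbation step is unnecessary.} Your $p$ is unicritical iff $\alpha^2=3\beta$, i.e.\ iff $h^2-8h+13=0$, which is the same factor that appears in the paper. Its discriminant is $-4(h^3-12h^2+46h-54)/\bigl(h^2(h-2)^3\bigr)$. The zeros of the numerator are $h\approx2.23$ and $h\approx4.88\pm0.59i$, all outside $\{|h-1|<1\}$. So the explicit $p$ is already generic and non-unicritical.

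\textbf{What each route buys.} Yours gives a rational, branch-free example whose nondegeneracy checks are transparent. It also avoids the paper's division by $\xi$ in \eqref{val_a}, which is $0/0$ at $h=1$ because there $\xi=0$. Moreover, any such cycle can be moved affinely to $\{0,1\}$, so your computation shows the example is unique up to affine conjugacy. The paper's route instead places the bad parameters directly in the standard slice $z^3-3z+a$.

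Both constructions need $h\neq0$. The $\mathbb{D}$ in the statement should therefore be read as $\{|h-1|<1\}$, as in the abstract.
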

For the classical Newton's method, the Julia set is a line if and only if a polynomial having exactly two roots with equal multiplicity is considered. We extend this result for relaxed Newton's method.
\begin{maintheorem}\label{not_line}
	Let $p$ be a polynomial with exactly two distinct roots.
	Then the Julia set of relaxed Newton's method $N_{h,p}$ is a line if and only if the two roots have equal multiplicity and $h$ is real.
\end{maintheorem}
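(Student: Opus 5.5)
The plan is to reduce to a normal form by affine conjugacy and then compute directly. Relaxed Newton maps are natural under affine changes of variable: if $T(z)=az+b$, then $N_{h,p\circ T}=T^{-1}\circ N_{h,p}\circ T$. Julia sets are mapped by $T$, and lines go to lines. So I may assume $p(z)=z^m(z-1)^n$ with $m,n\ge 1$. A direct computation gives
\[
\frac{p(z)}{p'(z)}=\frac{z(z-1)}{(m+n)z-m},\qquad N_{h,p}(z)=z-h\,\frac{z(z-1)}{(m+n)z-m}.
\]
This is a degree-two rational map. Its fixed points are $0$ and $1$, with multipliers $1-h/m$ and $1-h/n$, and $\infty$, with multiplier $\frac{m+n}{m+n-h}$.

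For sufficiency, take $m=n$ and $h$ real; I will show the Julia set is the line $\operatorname{Re} z=1/2$. I would translate by $z=w+\tfrac12$. The map then becomes odd-symmetric and reduces to $w\mapsto w-\frac{h(w^2-1/4)}{2mw}$. After rescaling $w=u/2$, this is
\[
u\mapsto \Big(1-\tfrac{h}{2m}\Big)u+\frac{h}{2m}\cdot\frac{1}{u}.
\]
For real $h$ this map commutes with $u\mapsto -\bar u$, and it preserves the imaginary axis. I would check that the right half-plane is mapped into itself. Since both coefficients are real with sum $1$, for $\operatorname{Re} u>0$ both $\operatorname{Re}u$ and $\operatorname{Re}(1/u)$ are positive, so the image has positive real part provided $0<h\le 2m$. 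In that case the two half-planes lie in the Fatou set, each being the basin of one root. The imaginary axis is completely invariant and consists of preimages of non-normal points, so it is the Julia set. When $h$ is real but outside $(0,2m]$, a root fails to be attracting, since the attracting condition $|1-h/m|<1$ requires $0<h<2m$. I would handle this by a limiting/conjugacy remark, or restrict to the parameter range where $N_{h,p}$ is a root-finding map (the paper's standing assumption $h\in\{|z-1|<1\}$ presumably gives $0<h<2$). In that range the same half-plane invariance argument applies.

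For necessity, suppose $J(N_{h,p})$ is a line $L$. Then $\infty\in J$, which is consistent since $\infty$ is repelling for $h$ in the relevant range. Because a degree-two map has two critical points, each immediate basin of the attracting roots $0$ and $1$ must contain a critical point, and the complement of $L$ has exactly two components. Each complementary half-plane is therefore a completely invariant-type Fatou component: they are either both fixed or swapped, and since each contains an attracting fixed point, both are fixed. The restriction to a half-plane is then a proper degree-one-or-two self-map of a disk. Counting degrees (total degree $2$, with $L$ completely invariant), each half-plane maps to itself with degree $1$ plus its preimage, so $N$ restricted to $L\cup\{\infty\}$, a circle, is a degree-two covering and $N$ is conjugate to a Blaschke product. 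A rational map preserving a circle commutes with the reflection $\sigma$ in that circle. So $\sigma$ swaps the two fixed roots $0,1$ and fixes $\infty$, which makes $\sigma$ the reflection in $\operatorname{Re} z=1/2$, that is $\sigma(z)=1-\bar z$. Multipliers transform by conjugation under an antiholomorphic conjugacy, so $1-h/m=\overline{1-h/n}$ and $\frac{m+n}{m+n-h}=\overline{\frac{m+n}{m+n-h}}$. The second equation forces $h\in\mathbb R$, and then the first gives $h/m=h/n$, so $m=n$.

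The main obstacle is the rigour of the step from "Julia set is a line" to "$N$ commutes with the reflection". I would argue it via: $L\cup\{\infty\}$ is completely invariant, so $N\circ\sigma$ and $\sigma\circ N$ agree on $L$, and hence agree everywhere by the identity principle applied to $\sigma\circ N\circ\sigma$, which is holomorphic and coincides with $N$ on a circle.
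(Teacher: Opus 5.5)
Your proposal is correct, and for the necessity direction it takes a genuinely different route from the paper. The paper puts the roots at $\pm1$ (multiplicities $k,m$) and conjugates by $\varphi(z)=\frac{z+1}{z-1}$. It then asserts, via K{\oe}nigs linearization, that a line Julia set must become a circle centered at $0$. Two explicit Julia points, the pole $\frac{m-k}{m+k}$ (sent to $-\frac mk$) and $\infty$ (sent to $1$), then force the radius $\frac mk$ to equal $1$, i.e.\ $k=m$. The condition $h\in\mathbb{R}$ is extracted only afterwards, by reducing to $z^2-1$ and noting that $z\frac{z+a}{1+az}$, with $a=1-h$, preserves the unit circle iff $a$ is real.

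You instead use Schwarz reflection. With $\hat L=L\cup\{\infty\}$, the inclusion $N(\hat L)\subseteq\hat L$ gives $\sigma\circ N\circ\sigma=N$. So $\sigma$ permutes $\{0,1,\infty\}$, fixes $\infty$, and must swap the roots because they lie in the Fatou set. Comparing multipliers under this antiholomorphic conjugacy yields $\frac{m+n}{m+n-h}\in\mathbb{R}$ and $1-\frac hn=\overline{1-\frac hm}$, hence $h\in\mathbb{R}$ and $m=n$ at once.

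Your argument is coordinate-free and needs only forward invariance of the Julia set and the fixed-point data. It also supplies the right reason for the paper's ``centered at $0$'' step: reflection in $\mathcal{J}(S)$ must swap the attracting fixed points $0$ and $\infty$, and local K{\oe}nigs coordinates do not by themselves justify this. The paper's route, once that step is granted, is a short explicit computation.

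For sufficiency the two proofs agree up to a change of model. Your map $u\mapsto(1-\frac{h}{2m})u+\frac{h}{2m}u^{-1}$ is exactly $N_{h/m,\,z^2-1}$ (Lemma~\ref{equal_pwr}). Your half-plane invariance is the half-plane form of the paper's Blaschke-product observation.

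A few small clean-ups:
\begin{itemize}
\item The degree-counting sentence in your necessity paragraph is unnecessary and inaccurate as written: each half-plane is completely invariant and is mapped onto itself with degree $2$, not $1$. The reflection argument never uses it, so simply delete it.
\item In the sufficiency step, justify that all of $i\mathbb{R}$ lies in the Julia set as follows. A Fatou component meeting $i\mathbb{R}$ would meet both half-planes, so it would contain points converging to $1$ and points converging to $-1$, which is impossible.
\item No limiting argument is needed for real $h\notin(0,2m)$. The standing assumption $|h-m|<m$ already forces $0<h<2m$, and $h=2m$ would make the map have degree one.
\end{itemize}
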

There may exist Euclidean isometries that can preserve the Julia set of a rational function $R$. The collection of such isometries is called the \textit{symmetry group} of $R$ and is denoted by $\Sigma R$. Therefore,
$$\Sigma R=\{\sigma(z)=az+b: |a|=1\text{ and }\sigma(\mathcal{J}(R))=\mathcal{J}(R)\}.$$
We study the symmetry group of relaxed Newton's method applied to polynomials having $n$th order rotational symmetry. We establish the equality in the symmetry groups of the polynomials and that of the relaxed Newton maps.
\begin{maintheorem}\label{Same_sym}
Consider the polynomial $p(z)=z^m(z^n-1)$, where $m\geq 0$ and $n\geq 2$. If $\mathcal{J}(N_{h,p})$ is not a line, then $\Sigma p= \Sigma N_{h,p}$, for $h\in \{z:|z-1|<1\}$.
\end{maintheorem}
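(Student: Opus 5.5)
Write $N=N_{h,p}$ with $p(z)=z^m(z^n-1)$ and $\omega=e^{2\pi i/n}$. The plan is to prove the two inclusions $\Sigma p\subseteq \Sigma N$ and $\Sigma N\subseteq \Sigma p$ separately.

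\emph{Step 1: identify $\Sigma p$ and prove $\Sigma p\subseteq\Sigma N$.} The Julia set of $p$ is compact and has centroid $0$, the average of the roots of $p'$. Hence every isometry preserving it fixes $0$, so $\Sigma p$ consists of rotations. Since $p(\omega z)=\omega^m p(z)$, it contains the rotations $z\mapsto \omega^k z$. I will take $\Sigma p=\{z\mapsto \omega^k z\}$, using the standard description of the symmetry group of a polynomial in normal form $z^{m+n}+\dots$: the group is determined by the gaps in the exponents, here $\gcd(m+n,m)=\gcd(m,n)$-type data. When $m=0$ there is extra symmetry only if $p$ is a monomial, which it is not. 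If the gcd gives more rotations, I adjust accordingly. Conversely, $p(\omega z)=\omega^m p(z)$ gives
\[
\frac{p(\omega z)}{p'(\omega z)}=\omega\,\frac{p(z)}{p'(z)},
\]
so $N(\omega z)=\omega N(z)$. Thus $N$ commutes with the rotation, which therefore maps $\mathcal J(N)$ onto itself. This gives $\Sigma p\subseteq\Sigma N$.

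\emph{Step 2: every $\sigma\in\Sigma N$ fixes $0$.} Suppose $\sigma(z)=az+b$ preserves $\mathcal J(N)$. Since $|h-1|<1$, the multiplier at $\infty$ is $d/(d-h)$ with $d=m+n$. Because $\mathrm{Re}\,h>0$, this has modulus $>1$ (I will check this), so $\infty$ is repelling and lies in $\mathcal J(N)$. The Fatou components are then the preimages of the basins of the roots. The key invariant is the set of unbounded immediate basins. The roots $\omega^k$ are simple, with multiplier $1-h$ inside $\mathbb D$, so their basins are attracting. The root $0$ has multiplier $1-h/m$ (attracting as well when $m\ge1$). The immediate basin of each root $\omega^k$ contains a symmetric invariant access to $\infty$. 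I will argue that $\sigma$ must permute the Fatou components, and in particular the unbounded ones, since $\sigma$ is an isometry fixing $\infty$. The unbounded immediate basins are exactly $n$ components, or $n+1$ if $0$'s basin is unbounded, arranged with $n$-fold rotational symmetry about $0$. An isometry permuting a configuration of $n\ge2$ sectors asymptotic to directions $\arg z=2\pi k/n+\theta_0$ must fix the common center, so $b=0$. To make this rigorous I would rather use a Julia-set argument. The $n$ invariant accesses to $\infty$ split $\mathcal J(N)$ near $\infty$ into curves asymptotic to $n$ rays through $0$. An isometry maps rays to rays of the same asymptotic directions, so it fixes their intersection point $0$ when $n\ge2$ and the directions are distinct. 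Here the non-line hypothesis excludes the degenerate case where the rays are collinear (a line).

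\emph{Step 3: the rotation angle is $2\pi k/n$.} Now $\sigma(z)=az$ with $|a|=1$. It permutes the asymptotic directions of the $n$ unbounded Julia "arms", which are $n$ equally spaced directions. Hence $a^n=1$ up to the possibility of a reflection-type coincidence, which does not arise for a rotation. So $\sigma\in\Sigma p$.

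\textbf{Main obstacle.} The main obstacle is Step 2/3: establishing that $\mathcal J(N)$ near $\infty$ has exactly $n$ asymptotic directions (channels). First I would linearize $N$ near the repelling point $\infty$, where $N(z)\approx \frac{d-h}{d}z$ (this factor has modulus $<1$), and compute the asymptotic directions of the channels from the preimages of the invariant rays $\arg z=2\pi k/n$ and $\arg z=\pi(2k+1)/n$. These rays are invariant because $N$ commutes with rotation, and when $h$ is real, conjugation symmetry also holds. If $h$ is not real, I would instead use that the immediate basins of $\omega^k$ are unbounded, which holds by the paper's sufficient conditions for unbounded immediate basins. I would then argue via the count of their accesses to $\infty$.
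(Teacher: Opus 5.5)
\textbf{Step 1 is fine; Steps 2--3 have a genuine gap.} In Step 1, Theorem~\ref{poly-symm} with $\alpha=m$, $\beta=n$ gives $\Sigma p=\{z\mapsto\omega^k z\}$ directly, with no gcd adjustment needed. Your identity $N(\omega z)=\omega N(z)$ then gives $\Sigma p\subseteq\Sigma N$.

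\textbf{Why Steps 2--3 fail.} Their mechanism is that $\mathcal{J}(N)$ has arms at $\infty$ with $n$ equally spaced asymptotic directions. This picture does not exist for most parameters in the statement.
\begin{itemize}
\item Near $\infty$ one has $N(z)=\mu^{-1}z+O(z^{1-n})$ with $\mu=d/(d-h)$, and $\mu\notin\mathbb{R}$ as soon as $h\notin\mathbb{R}$.
\item Since $N(\mathcal{A}_i)=\mathcal{A}_i$, the closed set of directions along which $\mathcal{A}_i$ tends to $\infty$ is mapped into itself by rotation through $-\arg\mu\neq 0$. When $\arg\mu/2\pi$ is irrational, this set is therefore the whole circle.
\item So the basins and their boundaries spiral around $\infty$, and there are no $n$ directions for $\sigma$ to permute.
\item Your fallback (``argue via the count of accesses to $\infty$'') does not replace this. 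Knowing the number of accesses tells you nothing about the translation part $b$ of $\sigma$ or about its angle.
\end{itemize}
Even for real $h$ the argument has holes.
\begin{itemize}
\item For $n=2$ the two directions are opposite, so translations along that line are not excluded.
\item The non-line hypothesis does not cure this: for $m\ge 1$ the Julia set is not a line, yet the directions are collinear.
\item You never rule out unbounded Fatou components other than the immediate basins, and your count implicitly needs this.
\end{itemize}

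\textbf{What is missing.} You need an invariant that is purely topological and blind to spiralling, plus a separate reason why $\sigma$ fixes $0$. The paper proceeds as follows.
\begin{itemize}
\item $\mathcal{J}(N)$ is locally connected (Remark~\ref{lcJS}), so Proposition~\ref{unbdd_imm} makes every immediate basin of a root unbounded.
\item Lemma~3.3 of \cite{Sym_dyn} shows these are the \emph{only} unbounded Fatou components.
\item Any $\sigma\in\Sigma N$ is a homeomorphism fixing $\infty$, so it permutes them.
\item The paper then asserts that every element of $\Sigma N$ is a rotation about $0$; this is your Step 2. Excluding translations and other centres is where the non-line hypothesis has to enter. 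For instance, finiteness of $\Sigma N$ yields a common fixed point, which must be $0$ since $z\mapsto\omega z\in\Sigma N$.
\item Such rotations fix $\mathcal{A}_0$ and permute $\mathcal{A}_1,\dots,\mathcal{A}_n$.
\item A nontrivial finite-order rotation about $0$ cannot map a simply connected $\mathcal{A}_i$ with $0\notin\mathcal{A}_i$ to itself, since $\mathcal{A}_i$ would then contain a loop winding around $0$. Hence the action is free and $|\Sigma N|\le n$.
\end{itemize}
This argument works for every $h$ in the disc, real or not, which your direction-counting does not.
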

To the best of our knowledge, this work provides the first systematic study of relaxed Newton's method as a family of rational maps on the Riemann sphere, addressing convergence, geometric finiteness, symmetry, and parameter-independent dynamical behavior. In this sense, the present paper bridges a gap between numerical root-finding theory and modern complex dynamics.

The structure of the paper is as follows. In Sect.~\ref{Sec_B&P} we collect background results that will be used throughout the article. Sect.~\ref{Sec_RNM} is devoted to geometric and dynamical properties of relaxed Newton maps, including fixed-point classification, the Scaling property, and the characterization of relaxed Newton maps among rational functions. The proofs of the first two main theorems are presented in Sect.~\ref{Sec_CC}. Finally, Sect.~\ref{Sec_SDR} contains further dynamical results, including symmetry considerations and the proofs of the remaining main theorems.
\section{Background and Preliminary Discussions}\label{Sec_B&P}

Let $R:\widehat{\mathbb{C}}\to \widehat{\mathbb{C}}$ be a rational function of degree at least two, where $\widehat{\mathbb{C}}$ denotes the Riemann sphere. The \textit{Fatou set} $\mathcal{F}(R)$ of $R$ is the set of all points $z\in \widehat{\mathbb{C}}$ for which the family of iterates $\{R^n\}_{n\geq 1}$ is normal (in the sense of Montel) in a neighborhood of $z$. The complement $\mathcal{J}(R)=\widehat{\mathbb{C}}\setminus \mathcal{F}(R)$ is called the \textit{Julia set} of $R$.

The Fatou set $\mathcal{F}(R)$ is open and completely invariant under $R$, while the Julia set $\mathcal{J}(R)$ is closed, non-empty, and completely invariant. Moreover, the Julia set is the locus of chaotic dynamics, whereas the dynamics on the Fatou set is stable. Each connected component of $\mathcal{F}(R)$ is called a \textit{Fatou component}, and the boundary of every Fatou component is contained in the Julia set.

For a rational function $R$, the \textit{multiplier} (we denote it by $\lambda$) of a fixed point $z_0\in \mathbb{C}$ is defined as $R'(z_0)$. If $\infty$ is a fixed point of $R$, then its multiplier is defined as $S'(0)$, where $S(z)=\frac{1}{R(\frac{1}{z})}$. Depending on the multiplier, a fixed point is classified as \textit{attracting} if $|\lambda|<1$ (\textit{superattracting} if $\lambda=0$), \textit{repelling} if $|\lambda|>1$, and \textit{indifferent} if $|\lambda|=1$. A fixed point is called \textit{weakly repelling} if it is either repelling or has multiplier equal to $1$.

The \textit{residue fixed point index} of $R$ at a fixed point $z_0$ is defined as 
$$
\iota (R,z_0)= \frac{1}{2 \pi i} {\oint\limits_{\gamma}\frac{1}{z-R(z)}dz},
$$
where $\gamma$ is a small positively oriented closed curve around $z_0$ that does not surround any other fixed point of $R$. If $\lambda\neq 1$, then the residue fixed point index satisfies
\begin{equation}\label{FPI_simple}
\iota (R,z_0)=\frac{1}{1-\lambda}.
\end{equation}
The sum of the residue fixed point indices over all fixed points of a rational function equals $1$ (see \cite[Theorem 12.4]{Milnor_book}).

A fixed point of a rational function $R$ is said to be multiple if it is a root of $R(z)-z=0$ with multiplicity at least two. Equivalently, $z_0$ is a multiple fixed point if and only if $R'(z_0)=1$. Buff and Henriksen prove the following.
\begin{lem}{\cite[Lemma 7]{BH2003}}\label{equal_rat}
	Let $R$ and $S$ be two rational functions having only simple fixed points. If both functions share the same fixed points with the same respective multipliers, then they are identical.
\end{lem}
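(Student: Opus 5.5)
The natural approach is to compare the two rational maps through the function $F(z)=\dfrac{1}{z-R(z)}-\dfrac{1}{z-S(z)}$. We will show that $F$ has no poles on $\widehat{\mathbb{C}}$, so it is constant, and then that the constant is $0$, which forces $R=S$.

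Since all fixed points are simple, each fixed point $z_0$ is a simple zero of $z-R(z)$ (and of $z-S(z)$), with derivative $1-\lambda$ there. We may assume $\lambda\neq 1$, because $\lambda=1$ would make the fixed point multiple. Hence $\frac{1}{z-R(z)}$ has a simple pole at $z_0$ with residue $\frac{1}{1-\lambda}$, in accordance with \eqref{FPI_simple}. The two maps share their fixed points and multipliers, so the poles of $\frac{1}{z-R(z)}$ and $\frac{1}{z-S(z)}$ in $\mathbb{C}$ coincide and have equal residues. Consequently $F$ has removable singularities at every finite fixed point. If $z_0$ is not fixed, both terms are holomorphic near $z_0$; at poles of $R$ or $S$ the corresponding term simply vanishes, so this causes no trouble.

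The main obstacle is the point $\infty$. If $\infty$ is not fixed by $R$, then $R(z)-z\sim -z$ as $z\to\infty$, so $\frac{1}{z-R(z)}\to 0$. If $\infty$ is fixed, I would first exploit the hypothesis that $R$ and $S$ share the same fixed points, so that $\infty$ is fixed for both or for neither. When $\infty$ is fixed with multiplier $\lambda_\infty\neq 1$, one computes $R(z)\sim z/\lambda_\infty$. Hence $z-R(z)\sim (1-1/\lambda_\infty)z$, and the term tends to $0$ again; if $\lambda_\infty=0$, then $R(z)$ grows faster than $z$ and the term still tends to $0$. The case $\lambda_\infty=\infty$ cannot occur for a fixed point. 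Thus in every case $F$ is holomorphic on $\widehat{\mathbb{C}}$ with $F(\infty)=0$. By Liouville's theorem, $F\equiv 0$, that is, $z-R(z)\equiv z-S(z)$, and so $R=S$.

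The only delicate step is this analysis at $\infty$, and it needs the multiplier at $\infty$ to be different from $1$. That requirement is exactly what the simplicity hypothesis at $\infty$ guarantees, since a fixed point at $\infty$ with multiplier $1$ would be a multiple fixed point.
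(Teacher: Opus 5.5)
Your proof is correct. The paper gives no proof of this lemma; it is quoted from Buff--Henriksen. So the only comparison is with the standard argument, and yours is that argument. It amounts to the partial-fraction identity
\[
\frac{1}{z-R(z)}=\sum_{j}\frac{1}{(1-\lambda_j)(z-z_j)},
\]
with the sum over the finite fixed points $z_j$. Both sides are rational, have the same principal parts, and vanish at $\infty$, so $R$ is recovered from its finite fixed-point data.

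The one point worth highlighting is your handling of $\infty$. You do not first conjugate by a M\"obius map to make $\infty$ non-fixed. Instead you observe that simplicity of $\infty$ as a fixed point already forces $1/(z-R(z))\to 0$. Your case analysis at $\infty$ is applied to each map separately, so the remark that $\infty$ is fixed for both or for neither is not actually needed.

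Consequently your argument proves a slightly stronger statement: two rational maps with only simple fixed points that share their \emph{finite} fixed points and multipliers coincide. Agreement at $\infty$ then follows automatically, consistent with the holomorphic index formula. This stronger form is what the paper tacitly uses in Lemma~\ref{equal_pwr} and in the characterization of relaxed Newton's method. In both places only the multipliers at the roots of $p$ are compared, and the multiplier at $\infty$ is never checked.
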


The next result, proved by Shishikura, provides a condition for the connected Julia set of a rational function.
\begin{thm}{\cite[Theorem I]{Shishikura2009}}\label{shishikura}
If the Julia set of a rational function $R$ of degree at least two is disconnected, then there exist at least two weakly repelling fixed points lying on two different components of the Julia set. In particular, if $R$ has exactly one repelling fixed point, then $\mathcal{J}(R)$ is connected.
\end{thm}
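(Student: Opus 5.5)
This theorem of Shishikura is quoted from \cite{Shishikura2009}, and the paper uses it as a black box. Still, here is how I would prove it; the argument follows Shishikura's strategy via quasiconformal surgery.

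\emph{Reduction to the "in particular" clause.} If $\mathcal{J}(R)$ is disconnected, the first assertion gives at least two weakly repelling fixed points. An indifferent fixed point with multiplier exactly $1$ is parabolic, so it lies in $\mathcal{J}(R)$ together with the repelling ones. The hypothesis "exactly one repelling fixed point" must therefore be read so as to exclude a second weakly repelling fixed point. With that reading, connectivity follows at once by contraposition, and everything rests on the first assertion.

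\emph{Strategy for the main claim.} Suppose $\mathcal{J}(R)$ is disconnected. Then some Fatou component $U$ is multiply connected. By Sullivan's classification, after replacing $U$ by a periodic image we may assume $U$ is an attracting or parabolic basin, or a Herman ring. (Siegel disks are simply connected, and rational maps have no wandering domains.) Choose a Jordan curve $\gamma\subset U$ that separates $\mathcal{J}(R)$, and let $D_1,D_2$ be the two complementary discs, each meeting $\mathcal{J}(R)$.

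The plan is to produce a weakly repelling fixed point inside each $D_i$. Pull $\gamma$ back, or push it forward along the dynamics in $U$, to obtain a nested configuration of the form $R(V)\supset \overline{V}$ (or the reverse) for a disc $V$ bounded by a curve in $U$. Then modify $R$ by quasiconformal surgery outside $V$, gluing in a model map with no extra fixed points. This yields a new rational map, or a polynomial-like map, in which the dynamics of $R$ on $D_i\cap\mathcal{J}$ survives and the outside is filled with an attracting basin.

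For such a map, apply the holomorphic fixed point index formula. The sum of the indices $\iota=\frac{1}{1-\lambda}$ equals $1$, while the glued-in attracting fixed point contributes an index whose real part exceeds $\tfrac12$. Since every attracting fixed point has $\operatorname{Re}\iota>\tfrac12$, some fixed point in $D_i$ must have $\operatorname{Re}\iota\le\tfrac12$, or be multiple. Such a point is repelling or has multiplier $1$, so it is weakly repelling. Alternatively, use the Lefschetz-type count that a polynomial-like map of degree $\ge2$ has a non-attracting fixed point. Doing this separately for $D_1$ and $D_2$ gives two weakly repelling fixed points, lying in different components of $\mathcal{J}(R)$ because $\gamma$ separates them.

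\emph{Main obstacle.} The hard step is the surgery in the Herman ring case, and the parabolic case when $\gamma$ cannot simply be chosen with $R(\gamma)$ nested. For Herman rings I would try a direct argument instead. Use the periodic ring's core curve and the degree-one action of $R^k$ on the ring: this makes each complementary disc map over itself in a way that lets Brouwer/Lefschetz plus the index inequality force a weakly repelling fixed point of $R$ itself, not merely of an iterate. Controlling iterate versus map is the delicate point. Shishikura handles it by a careful choice of the disc family, and I would follow that choice.
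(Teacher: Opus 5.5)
Your proposal is a plan rather than a proof. The step that carries the whole theorem is deferred, and two intermediate claims it relies on are false.

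\textbf{First false claim: the reduction to a periodic multiply connected component.} A multiply connected Fatou component may map onto a simply connected one; Riemann--Hurwitz allows this once critical points are present. For the McMullen maps $z^n+\lambda z^{-n}$ with $n\ge3$ and $\lambda\neq0$ small:
\begin{itemize}
\item $\mathcal{J}$ is a Cantor set of circles;
\item the only periodic Fatou component is the simply connected basin of $\infty$;
\item every multiply connected component is a strictly preperiodic annulus.
\end{itemize}
So the case ``$U$ multiply connected, $R(U)$ simply connected'' needs its own argument.

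\textbf{Second false claim: a weakly repelling fixed point in each $D_i$.} This target fails for an arbitrary separating curve, even inside a fixed basin. Take $R(z)=z^2+c$ with $c$ outside the Mandelbrot set. Let $\gamma$ be the boundary of a component of $R^{-n}(\{|z|<\rho\})$, with $\rho,n$ large, that contains neither finite fixed point. Then $\gamma$ lies in the Fatou set and separates $\mathcal{J}(R)$, yet its inside contains no fixed point. Moreover, such pull-back discs are nested only under $R^n$, which produces periodic points rather than fixed points.

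\textbf{The missing step.} Choosing the curve dynamically, and then obtaining fixed points of $R$ itself (not of an iterate) on both sides, is exactly the content of Shishikura's theorem. You hand this step to ``Shishikura's choice of disc family.'' The difficulty is also not confined to Herman rings: it already arises for preperiodic components and for basins of period $p>1$.

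\textbf{Smaller points.}
\begin{itemize}
\item The paper gives no argument here; the result is simply quoted.
\item Your reading of the ``in particular'' clause is correct and necessary. For $R(z)=z^3-az^2+z$ with $a$ real and large, $0$ is a fixed point of multiplier $1$, and $a$ is the only repelling fixed point (multiplier $a^2+1$). Also $\infty$ is superattracting, and the critical point near $2a/3$ escapes, so $\mathcal{J}(R)$ is disconnected. The clause holds only with ``weakly repelling'', which is how the paper uses it.
\item In your index count, $\operatorname{Re}\iota\le\frac12$ does not give a weakly repelling point, since an indifferent fixed point with $\lambda\neq1$ has $\operatorname{Re}\iota=\frac12$. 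Argue instead: if no fixed point in $D_i$ were weakly repelling, all would have $\operatorname{Re}\iota\ge\frac12$. Together with the glued-in attracting point, the real parts would then sum to more than $1$, a contradiction.
\item For the same reason, ``a polynomial-like map has a non-attracting fixed point'' is too weak. You need the weakly repelling version, which follows from straightening plus $\sum 1/(1-\lambda)=0$ over the finite fixed points of a polynomial.
\end{itemize}
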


The symmetry group of the Julia set of a polynomial is well studied. A polynomial is said to be normalized if it is monic and its second leading coefficient is $0$. Such a polynomial $p$ can be represented as $p(z)=z^\alpha p_0(z^\beta)$, where $p_0$ is a monic polynomial, and $\alpha$, $\beta$ are maximal for the expression. Beardon proved the following.
\begin{thm}{\cite[Theorem 9.5.4]{Beardon_book}}\label{poly-symm}
	For a normalized polynomial $p$,
	$\Sigma p=\{\sigma: \sigma(z)=\lambda z, \lambda^\beta=1\}.$ 
\end{thm}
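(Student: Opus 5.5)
The plan is to show that any isometry preserving $\mathcal{J}(p)$ is a rotation $\sigma(z)=\lambda z$ satisfying the functional equation $p(\lambda z)=\lambda^d p(z)$, where $d=\deg p$. Once we have that equation, comparing coefficients gives $\lambda^\beta=1$. The converse inclusion is immediate: if $\lambda^\beta=1$ then, since $d-\alpha$ is divisible by $\beta$,
$$p(\lambda z)=\lambda^\alpha z^\alpha p_0(\lambda^\beta z^\beta)=\lambda^\alpha p(z)=\lambda^d p(z).$$
Iterating gives $|p^n(\lambda z)|=|p^n(z)|$ for all $n$. So the filled Julia set $K(p)$ is invariant under $z\mapsto\lambda z$, and hence so is its boundary $\mathcal{J}(p)$.

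\textbf{Step 1: reduce to rotations.} Let $\sigma(z)=az+b$ with $|a|=1$ and $\sigma(\mathcal{J}(p))=\mathcal{J}(p)$.
\begin{itemize}
\item Then $\sigma$ preserves the unbounded component of $\widehat{\mathbb{C}}\setminus\mathcal{J}(p)$, hence preserves $K(p)$.
\item The Green's function $g$ of $\widehat{\mathbb{C}}\setminus K(p)$ with pole at $\infty$ is determined by $K(p)$ alone. Since $\sigma$ is an isometry fixing $\infty$, we get $g\circ\sigma=g$, and therefore the equilibrium (harmonic) measure $\mu$ is $\sigma$-invariant.
\item The barycenter $\int z\,d\mu$ of $\mu$ equals the average of the roots of $p$ (equivalently, the negative of the centre in the expansion $g(z)=\log|z|+\mathrm{Re}(c_1/z)+\dots$). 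For a normalized polynomial this barycenter is $-a_{d-1}/d=0$.
\item An isometry preserving $\mu$ fixes its barycenter, so $b=0$ and $\sigma(z)=\lambda z$.
\end{itemize}

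\textbf{Step 2: rigidity, the main obstacle.} We must derive $p\circ\sigma=\lambda^d p$ from the invariance of $\mathcal{J}(p)$ alone. I would use the Böttcher coordinate $\varphi$ near $\infty$, normalized by $\varphi(z)=z+O(1)$, which satisfies $\varphi\circ p=\varphi^d$ and $g=\log|\varphi|$.
\begin{itemize}
\item From $g\circ\sigma=g$, the function $\varphi\circ\sigma/\varphi$ is holomorphic, of modulus one, and nonvanishing near $\infty$. Hence it is a constant, which equals $\lambda$ by the normalization; that is, $\varphi\circ\sigma=\lambda\varphi$.
\item Since $g\circ p=d\,g$ and $g\circ\sigma=g$, the same argument applied to $\varphi(p(\sigma z))$ gives $\varphi(p(\sigma z))=\varphi(\sigma z)^d=\lambda^d\varphi(z)^d=\lambda^d\varphi(p(z))$.
\item It remains to pass $\lambda^d$ back through $\varphi$. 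This requires care because $\varphi$ is only defined near $\infty$, so I would replace a naive use of $\varphi\circ\sigma=\lambda\varphi$ with an argument via $\varphi^{-1}$ and uniqueness of the normalization. One then gets $p(\sigma z)=\sigma^d(p(z))$ on a neighbourhood of $\infty$, where $\sigma^d(w)=\lambda^d w$. By the identity theorem this holds everywhere.
\item An alternative route is to invoke the theorem that polynomials of equal degree with the same Julia set differ by a symmetry. Applied to $\sigma^{-1}\circ p\circ\sigma$ and $p$, it yields $\sigma^{-1}p\sigma=\tau p$ with $\tau(z)=\nu z$. Matching leading coefficients then forces $p(\lambda z)=\lambda^d p(z)$.
\end{itemize}

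\textbf{Step 3: coefficients.} Write $p(z)=z^d+\sum_{k<d}a_kz^k$. The equation $p(\lambda z)=\lambda^d p(z)$ gives $a_k(\lambda^k-\lambda^d)=0$, so $\lambda^{d-k}=1$ whenever $a_k\neq0$. The maximality of $\alpha$ and $\beta$ in $p(z)=z^\alpha p_0(z^\beta)$ means exactly that $\beta=\gcd\{d-k: a_k\neq 0\}$. Hence $\lambda^\beta=1$, which is the required inclusion $\Sigma p\subseteq\{\sigma(z)=\lambda z:\lambda^\beta=1\}$.
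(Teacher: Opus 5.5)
The paper gives no proof of this statement; it is quoted from Beardon's book. Your proposal is a correct self-contained proof along the classical lines: invariance of the Green's function $g$ under $\sigma$, B\"ottcher rigidity giving $p\circ\sigma=\sigma^{\circ d}\circ p$, comparison of coefficients, and the converse inclusion via $p(\lambda z)=\lambda^d p(z)$. A few points are worth tightening.

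\textbf{The monomial case.} The statement tacitly excludes $p(z)=z^d$. In that case no maximal $\beta$ exists and $\Sigma p$ is the whole circle group. Your Step 3 would then take a gcd over an empty set, and your argument correctly yields all rotations.

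\textbf{The ``care'' in Step 2 is not a real obstacle.}
\begin{enumerate}[(i)]
\item Take $U=\{g>R\}$ with $R$ large enough that $\varphi$ is univalent on $U$.
\item Since $g\circ\sigma=g$, we have $\sigma(U)=U$. Since $g\circ p=dg$, we have $p(U)\subset U$.
\item Hence $\varphi\circ\sigma=\lambda\varphi$ can simply be iterated on $U$ to give $\varphi\circ\sigma^{\circ d}=\lambda^d\varphi$.
\item Injectivity of $\varphi$ on $U$ then turns $\varphi(p(\sigma z))=\lambda^d\varphi(p(z))=\varphi(\sigma^{\circ d}(p(z)))$ directly into $p\circ\sigma=\sigma^{\circ d}\circ p$ on $U$. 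No appeal to $\varphi^{-1}$ or to uniqueness of the normalization is needed.
\end{enumerate}

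\textbf{The barycentre part of Step 1 is redundant.}
\begin{enumerate}[(i)]
\item You still need the first part of Step 1, namely that $\sigma$ preserves $K(p)$ and hence $g$.
\item After that, the B\"ottcher argument runs verbatim for $\sigma(z)=\lambda z+b$ and gives $\varphi(\lambda z+b)=\lambda\varphi(z)$.
\item Compare constant terms using $\varphi(z)=z+a_{d-1}/d+O(1/z)$. This gives $b=(\lambda-1)a_{d-1}/d$, which is $0$ for normalized $p$.
\item So there is no need for the equilibrium measure or the centroid computation.
\end{enumerate}

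\textbf{The alternative route via ``polynomials with the same Julia set''.} This is best dropped. That theorem is itself proved by exactly this B\"ottcher argument, so invoking it adds nothing over what you already have.
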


A relation between the symmetry group of a normalized polynomial $p$ and that of a root-finding method applied to $p$ is established in \cite{Sym_dyn}.
\begin{thm}{\cite[Theorem 1.1]{Sym_dyn}}\label{rel_sym}
	If a root-finding method $F$ satisfies the Scaling property, then for every normalized polynomial $p$, $\Sigma p\subseteq \Sigma F_p$.
\end{thm}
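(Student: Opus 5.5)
The plan is to use Theorem~\ref{poly-symm} to describe $\Sigma p$ explicitly as a finite group of rotations. Then show, using the Scaling property, that each such rotation conjugates $F_p$ to itself, so it preserves $\mathcal{J}(F_p)$. Throughout, I take the Scaling property in its usual form: for every affine map $T(z)=az+b$ with $a\neq 0$ and every polynomial $p$, one has $F_{p\circ T}=T^{-1}\circ F_p\circ T$. I also use that $F_{cp}=F_p$ for every nonzero constant $c$. This second fact holds for all standard root-finding methods, since they depend only on the roots of $p$ (equivalently, on $p'/p$ and its derivatives). If it is not built into the definition, I would check it directly for the method at hand, for example $N_{h,cp}=N_{h,p}$ is immediate from \eqref{Formula_ReNew}.

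The first step is to describe $\Sigma p$ for a normalized polynomial $p$. Write $p(z)=z^\alpha p_0(z^\beta)$ as in the preliminaries. By Theorem~\ref{poly-symm}, every $\sigma\in\Sigma p$ has the form $\sigma(z)=\lambda z$ with $\lambda^\beta=1$. For such $\sigma$ we get $(\sigma z)^\beta=z^\beta$, and therefore
$$p(\sigma(z))=\lambda^\alpha z^\alpha p_0(z^\beta)=\lambda^\alpha p(z),$$
that is, $p\circ\sigma=\lambda^\alpha p$.

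The second step applies the Scaling property with $T=\sigma$. It gives $\sigma^{-1}\circ F_p\circ\sigma=F_{p\circ\sigma}$. Since $p\circ\sigma=\lambda^\alpha p$ and $F$ is unchanged when $p$ is multiplied by a constant, this equals $F_{\lambda^\alpha p}=F_p$. Hence $F_p\circ\sigma=\sigma\circ F_p$, so $F_p$ commutes with $\sigma$. It follows that $F_p^n\circ\sigma=\sigma\circ F_p^n$ for all $n$.

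The third step shows that $\sigma$ preserves the Fatou and Julia sets of $F_p$. Because $\sigma$ is a M\"obius map (indeed a rotation, so a spherical isometry), normality of $\{F_p^n\}$ near $z$ is equivalent to normality of $\{\sigma^{-1}\circ F_p^n\circ\sigma\}=\{F_p^n\}$ near $\sigma^{-1}(z)$. Hence $\sigma(\mathcal{F}(F_p))=\mathcal{F}(F_p)$, and so $\sigma(\mathcal{J}(F_p))=\mathcal{J}(F_p)$. Since $|\lambda|=1$, the map $\sigma$ is a Euclidean isometry, and therefore $\sigma\in\Sigma F_p$. This proves $\Sigma p\subseteq\Sigma F_p$.

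The only point I expect to need care is the invariance $F_{cp}=F_p$. The rest is formal once Theorem~\ref{poly-symm} reduces $\Sigma p$ to rotations $z\mapsto\lambda z$ with $\lambda^\beta=1$.
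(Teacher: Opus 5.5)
Your proof is correct. The paper gives no argument of its own for this statement (it quotes it from Nayak and Pal), and yours is the standard proof: write $\sigma(z)=\lambda z$ with $\lambda^\beta=1$ via Theorem~\ref{poly-symm}, observe $p\circ\sigma=\lambda^\alpha p$, and use the Scaling property to get $\sigma^{-1}\circ F_p\circ\sigma=F_p$, so that $\sigma$ preserves $\mathcal{J}(F_p)$. The point you flagged needs no separate check: the paper states the Scaling property for $g=\lambda(p\circ T)$ with arbitrary $\lambda\neq 0$, and taking $T$ to be the identity gives $F_{cp}=F_p$ directly.
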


For an attracting fixed point $z_0$, the set $\mathcal{B}_{z_0}=\{z:\lim\limits_{n\to \infty}R^n(z)=z_0\}$ is called \textit{basin} of $z_0$. A connected component of $\mathcal{B}_{z_0}$ that contains $z_0$ is called the \textit{immediate basin}, denoted by $\mathcal{A}_{z_0}$.
\section{Relaxed Newton's Method: Formula, Properties and Characterization}\label{Sec_RNM}
Recall (\ref{Formula_ReNew}), for a polynomial $p$, the corresponding relaxed Newton map is defined as
$$N_{h, p}(z)=z-h \frac{p(z)}{p'(z)}.$$ 
If $p$ is linear or a monomial, then $N_{h, p}$ is linear. From now onwards, to study relaxed Newton maps, we consider polynomials having at least two distinct roots.
\subsection{Fixed Points and Properties}
It is clear from (\ref{Formula_ReNew}) that a finite fixed point of $N_{h,p}$ is precisely a root of $p$. Now
$$
N_{h, p}^{\prime}(z)=1-h+h \frac{p(z) p^{\prime \prime}(z)}{\left(p^{\prime}(z)\right)^{2}}. 
$$
It follows that a simple root of $p$ is a fixed point of $N_{h, p}$ with multiplier $1-h$, whereas if $z_{0}$ is a root of $p$ with multiplicity $m_{0}$, then its multiplier as a fixed point of $N_{h,p}$ is
$$
\lambda_{z_{0}}=N_{h, p}^{\prime}\left(z_{0}\right) =1-h+h \frac{m_{0}-1}{m_{0}}=1-\frac{h}{m_{0}}.$$
Since the relaxed Newton's method represents a family of root-finding methods, a root $z_{0}$ of $p$ with multiplicity $m_{0}$ should be an attracting fixed point of $N_{h, p}$. Thus, we get  $\left|1-\frac{h}{m_{0}}\right|<1$, and this gives $\left|h-m_{0}\right|<m_{0}$.
Therefore 
$$h \in \mathbb{D}_{m}(m)=\{z:|z-m|<m\},$$ 
where $m$ is the least multiplicity among all the roots of $p$. More precisely, if $p$ has a simple root, then $h \in \mathbb{D}_{1}(1)$.
\par 
The point at $\infty$ is a fixed point of $N_{h,p}$ with multiplier $\frac{d}{d-h}$, where $d=\deg(p)$. Since $p$ has at least two distinct roots, we have $m<d$, where $m$ is the minimum of the multiplicities of the roots of $p$. Consequently, $|d-h|<d$, and hence, $\infty$ is a repelling fixed point of $N_{h,p}$. Therefore, $N_{h,p}$ has exactly one repelling fixed point, while all other fixed points are attracting. By Theorem~\ref{shishikura}, it follows that the Julia set of $N_{h,p}$ is connected.

Like the classical Newton's method, relaxed Newton's method satisfies the Scaling property.
\begin{lem}[Scaling property]
For every polynomial $p$ and every affine map $T, N_{h, g}(z)=T^{-1}\circ N_{h, p}\circ T(z)$, where $g(z)=\lambda (p\circ T)(z)$ and $\lambda \in \mathbb{C} \backslash\{0\}$.
\end{lem}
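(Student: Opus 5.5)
The plan is a direct computation with the chain rule. Write $T(z)=az+b$ with $a\neq 0$, so that $T^{-1}(w)=(w-b)/a$. Put $g(z)=\lambda\, p(T(z))$. Differentiating gives
\[
g'(z)=\lambda a\, p'(T(z)).
\]
Hence the quotient appearing in the relaxed Newton formula transforms as
\[
\frac{g(z)}{g'(z)}=\frac{1}{a}\,\frac{p(T(z))}{p'(T(z))}.
\]
The factor $\lambda$ cancels, which is why the normalizing constant plays no role.

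Next I expand the conjugated map. Applying $T^{-1}$ to $N_{h,p}(T(z))$ gives
\[
T^{-1}\bigl(N_{h,p}(T(z))\bigr)=\frac{T(z)-h\,\frac{p(T(z))}{p'(T(z))}-b}{a}.
\]
Substituting $T(z)-b=az$, this becomes
\[
z-\frac{h}{a}\,\frac{p(T(z))}{p'(T(z))}=z-h\,\frac{g(z)}{g'(z)}=N_{h,g}(z),
\]
which is the claim.

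The identity holds as an identity of rational functions on $\widehat{\mathbb{C}}$: both sides agree away from the finitely many zeros of $p'\circ T$, and they extend continuously, with $\infty$ sent to $\infty$ on both sides since $T$ is affine. I do not expect any real obstacle. The only points needing care are the cancellation of the factor $a$ coming from the chain rule against the $1/a$ in $T^{-1}$, and the observation that $h$ enters linearly and is untouched by the conjugation. That is exactly why the relaxation parameter is preserved, and it gives the Scaling property in the sense needed for Theorem~\ref{rel_sym}.
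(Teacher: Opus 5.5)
Your proof is correct and is essentially the paper's own chain-rule computation; the paper checks the equivalent identity $T\circ N_{h,g}=N_{h,p}\circ T$ instead of expanding $T^{-1}\circ N_{h,p}\circ T$ directly. Incidentally, the paper's displayed formulas drop the factor $h$, which your computation correctly keeps.
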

\begin{proof}
Let $T(z)=A z+B, A \neq 0$.
Then $g'(z)=\lambda A p'(A z+B)$ and
$$N_{h, g}(z)=z-\frac{p(A z+B)}{A p'(A z+B)}.$$
Therefore,
$$A N_{h, g}(z)+B =(A z+B)-\frac{p(A z+B)}{p^{\prime}(A z+B)} =N_{h, p}(A z+B),$$
which implies that
$$T\left(N_{h, g}(z)\right) =N_{h, p}(T(z)).$$
Hence the result.
\end{proof}
\begin{obs}[Usefulness of the Scaling property]\label{Use_SP}
Using the Scaling property we can consider a polynomial to be monic whenever we study a root-finding method that satisfies the Scaling property. Moreover, this property allows specific polynomial families to be represented by a single polynomial. Consequently, studying the root-finding method for the representative polynomial suffices for the entire family. The following are some instances.
\begin{enumerate}
\item Polynomials having exactly two roots: Suppose $\alpha$ and $\beta$ are two roots of the polynomial $p$. Then by pre-composing $p$ with the affine map $T(z)=\frac{1}{2}((\alpha -\beta)z+(\alpha +\beta)$, we can take the roots to be $1$ and $-1$. 
\item  Unicritical polynomial: A unicritical polynomial $(z-\alpha)^n+\beta$, where $n\geq 2$ and $\beta \neq 0 $ (we exclude the case whenever the polynomial is linear or a monomial), can be taken as $z^n-1$ by pre-composing the affine map $T(z)=Az+\alpha$, where $A^n=-\beta$, and by multiplying the constant $\lambda=-\frac{1}{\beta}$.
\item Non-unicritical cubic polynomial: Consider a cubic polynomial $p(z)=z^3+a_1z^2+a_2z+a_3$. It is not unicritical whenever $a_1^2\neq 3a_2$. Then for some $A\in \mathbb{C}$, where $A^2=\frac{1}{9}(a_1^2-3a_2)$ and $\xi= -\frac{a_1}{3}$, the polynomial $\frac{1}{A^3}p(Az+\xi)$ is of the form $z^3-3z+\frac{p(\xi)}{A^3}$. Thus, to study the relaxed Newton's method applied to non-unicritical cubic polynomials, it is sufficient to consider the one-parameter family of cubic polynomials $z^3-3z+a$, where $a\in \mathbb{C}$. 
\end{enumerate}  
\end{obs}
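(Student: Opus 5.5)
The plan is to verify each of the three items by producing an explicit affine map $T(z)=Az+B$ with $A\neq 0$ and a constant $\lambda\neq 0$. For each, we check that $g=\lambda(p\circ T)$ is the stated representative and then invoke the Scaling property, which gives $N_{h,g}=T^{-1}\circ N_{h,p}\circ T$.

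First we record why this reduction is legitimate. An affine conjugacy maps $\mathcal{F}(N_{h,g})$ onto $\mathcal{F}(N_{h,p})$ and $\mathcal{J}(N_{h,g})$ onto $\mathcal{J}(N_{h,p})$. It sends roots of $g$ to roots of $p$ (with the same multiplicities), and it sends basins to basins. It also preserves the multipliers of fixed points. So every dynamical property studied here is the same for $p$ and for its representative $g$: convergence, $h$-convergence, the Julia set being a line (an affine image of a line is a line), and unboundedness of immediate basins. Similarly, the multiplicative constant $\lambda$ does not change $p/p'$, which is why we may assume $p$ is monic.

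\textbf{Item (1).} Write $p(z)=c(z-\alpha)^k(z-\beta)^l$ with $\alpha\neq\beta$, and take $T(z)=\frac12\bigl((\alpha-\beta)z+(\alpha+\beta)\bigr)$. Then $T(1)=\alpha$ and $T(-1)=\beta$. Moreover $T(z)-\alpha=\frac{\alpha-\beta}{2}(z-1)$ and $T(z)-\beta=\frac{\alpha-\beta}{2}(z+1)$. Hence
$$p(T(z))=c\left(\tfrac{\alpha-\beta}{2}\right)^{k+l}(z-1)^k(z+1)^l.$$
Choosing $\lambda$ to cancel the leading constant gives the monic polynomial $(z-1)^k(z+1)^l$, which has roots $\pm1$ with the original multiplicities.

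\textbf{Item (2).} For $p(z)=(z-\alpha)^n+\beta$, take $T(z)=Az+\alpha$ with $A^n=-\beta$; this is possible since $\beta\neq 0$. Then $p(T(z))=A^nz^n+\beta=-\beta(z^n-1)$, and $\lambda=-1/\beta$ yields $z^n-1$.

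\textbf{Item (3).} For the cubic $p(z)=z^3+a_1z^2+a_2z+a_3$, first translate by $\xi=-a_1/3$. This kills the quadratic term and gives the depressed cubic
$$p(z+\xi)=z^3+\left(a_2-\tfrac{a_1^2}{3}\right)z+p(\xi),$$
where the constant term equals $p(\xi)$ by evaluating at $z=0$. Set $A^2=\frac19(a_1^2-3a_2)$, so that $a_2-\frac{a_1^2}{3}=-3A^2$. The non-unicritical hypothesis $a_1^2\neq 3a_2$ is exactly what guarantees $A\neq0$, so $T(z)=Az+\xi$ is a genuine affine map. Then
$$p(Az+\xi)=A^3z^3-3A^3z+p(\xi),$$
and dividing by $A^3$ gives $z^3-3z+a$ with $a=p(\xi)/A^3$. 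Conversely, every $a\in\mathbb{C}$ arises, for instance from $p(z)=z^3-3z+a$ itself.

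The only step needing care is item (3), namely the bookkeeping of the depressed cubic and the role of $a_1^2\neq3a_2$ in ensuring $A\neq0$. The choice of the square root $A$ is immaterial, since the two choices differ by the conjugacy $z\mapsto -z$, which sends $a$ to $-a$. Everything else is direct substitution followed by an appeal to the Scaling property.
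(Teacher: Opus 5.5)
Your proposal is correct and matches the paper's approach. The paper gives no separate proof: each item is a direct substitution showing that $\lambda(p\circ T)$ equals the stated representative, followed by the Scaling property $N_{h,g}=T^{-1}\circ N_{h,p}\circ T$. Your depressed-cubic computation, your observation that $a_1^2\neq 3a_2$ is exactly the condition $A\neq 0$, and your remark that the two choices of $A$ correspond to $a\mapsto -a$ fill in details the paper leaves to the reader.
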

\subsection{Characterization}
\begin{lem}\label{equal_pwr}
For every polynomial $p$ and every $n\in \mathbb{N}$, $N_{h, p}=N_{nh,p^n}$.
\end{lem}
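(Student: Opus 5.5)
This is a direct computation from the defining formula (\ref{Formula_ReNew}); no earlier dynamical result is needed. The plan is to compute the logarithmic derivative of $p^n$ and substitute it into the definition of $N_{nh,p^n}$.

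First, by the chain rule, $(p^n)'(z)=n\,p(z)^{n-1}p'(z)$. Hence, wherever $p'(z)\neq 0$ and $p(z)\neq 0$,
$$\frac{p^n(z)}{(p^n)'(z)}=\frac{p(z)^n}{n\,p(z)^{n-1}p'(z)}=\frac{1}{n}\,\frac{p(z)}{p'(z)}.$$
Substituting into the formula gives
$$N_{nh,p^n}(z)=z-nh\cdot\frac{1}{n}\,\frac{p(z)}{p'(z)}=z-h\,\frac{p(z)}{p'(z)}=N_{h,p}(z).$$

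Second, I extend the identity from this set to all of $\widehat{\mathbb{C}}$. Both sides are rational maps, and they agree on a cofinite subset of $\mathbb{C}$. Two rational functions that agree on an infinite set are identical, so the maps coincide everywhere, including at the roots of $p$, the critical points of $p$, and $\infty$. Equivalently, one may note that after cancelling the common factor $p^{n-1}$ the two expressions are literally the same rational function.

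The only point needing care, and it is minor, is the treatment of the points where the quotient is formally undefined. The identity-theorem remark above settles it.
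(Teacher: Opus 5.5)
Your proof is correct, but it takes a different route from the paper.

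The paper argues through fixed-point data. A root $\alpha$ of $p$ of multiplicity $m$ is a root of $p^n$ of multiplicity $mn$. Hence it is a fixed point of both $N_{h,p}$ and $N_{nh,p^n}$ with the same multiplier $1-\frac{h}{m}=1-\frac{nh}{mn}$. The Buff--Henriksen rigidity result (Lemma~\ref{equal_rat}) then forces the two maps to coincide.

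You instead cancel the common factor $p^{n-1}$ to get $\frac{p^n}{(p^n)'}=\frac{1}{n}\frac{p}{p'}$. This is an identity in $\mathbb{C}(z)$, and it gives the conclusion at once.

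Your argument is more elementary and more self-contained. It needs no external lemma and holds for every $h$, including $h=0$, where every point is fixed and Lemma~\ref{equal_rat} does not apply. It also avoids two checks that the paper's proof leaves implicit. First, the fixed point at $\infty$ must match: its multiplier is $\frac{d}{d-h}$ for $N_{h,p}$ and $\frac{nd}{nd-nh}$ for $N_{nh,p^n}$, and these agree. Second, all fixed points must be simple, which requires $h\neq 0$.

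The paper's route fits the theme of that subsection. It shows that a relaxed Newton map is determined by its fixed-point multipliers, that is, by the ratios $h/m_i$ rather than by $h$ and the $m_i$ separately. That idea drives the subsequent characterization lemmas.
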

\begin{proof}
Let $\alpha$ be a root of $p$ with multiplicity $m$. Then it is a root of $p^n$ with multiplicity $mn$. Therefore, $\alpha$ is a fixed point of both $N_{h, p}$ and $N_{nh,p^n}$ with the same multiplier $1-\frac{h}{m}$. Thus, by Lemma~\ref{equal_rat}, we get $N_{h, p}=N_{nh,p^n}$.
\end{proof}
\begin{cor}
If all the finite fixed points of $N_{h,p}$ are superattracting, then all the roots of $p$ have the same multiplicity. Moreover, if $m$ is the multiplicity of all roots of $p$, i.e., $p=q^m$, where $q$ is a generic polynomial, then $N_{h,p}=N_q$.
\end{cor}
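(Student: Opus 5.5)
The corollary follows from the multiplier formula for roots together with Lemma~\ref{equal_pwr}. Let $z_0$ be a root of $p$ of multiplicity $m_0$. As computed above, its multiplier as a fixed point of $N_{h,p}$ is $\lambda_{z_0}=1-\frac{h}{m_0}$. If every finite fixed point is superattracting, then $1-\frac{h}{m_0}=0$, so $m_0=h$ for every root $z_0$. Hence all roots have the same multiplicity $m=h$, which forces $h$ to be a positive integer. This gives the first assertion.

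For the second assertion, write $p=q^m$, where $q$ has only simple roots; this is what ``generic'' means here. Because $h=m$, Lemma~\ref{equal_pwr} applied to $q$ with relaxation parameter $1$ and exponent $m$ gives
\[
N_{1,q}=N_{m\cdot 1,\,q^m}=N_{h,p}.
\]
Since $N_{1,q}$ is the classical Newton map $N_q$, we get $N_{h,p}=N_q$.

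One could also check this directly: $p'/p=m\,q'/q$, so $h\,p/p'=(h/m)\,q/q'=q/q'$. This makes the identity independent of Lemma~\ref{equal_rat}, and it also covers the case $m=1$, where the statement is trivial.

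There is no real obstacle. The one point needing care is the hypothesis that ``all the finite fixed points'' are superattracting, which should be read as meaning that $h$ equals each root's multiplicity. That $h$ is then a positive integer is a consequence, not an extra assumption. One should also note that the decomposition $p=q^m$ with $q$ squarefree exists exactly because all multiplicities are equal to $m$.
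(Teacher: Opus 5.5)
Your proposal is correct and matches the paper's intended argument. The paper states this corollary without proof right after Lemma~\ref{equal_pwr}; it follows exactly as you say from $1-\frac{h}{m_0}=0$ forcing every multiplicity to equal $h$, and then from Lemma~\ref{equal_pwr} applied with relaxation parameter $1$ and exponent $m$. Your direct computation $h\,p/p'=(h/m)\,q/q'$ is a good self-contained check that avoids relying on Lemma~\ref{equal_rat} and on the implicit matching of the multiplier at $\infty$.
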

As all the fixed points of $N_{h,p}$ are simple, and the distinct roots of $p$ are the only finite fixed points of $N_{h,p}$, the degree of $N_{h,p}$ is exactly the number of distinct roots of $p$. We consider quadratic relaxed Newton maps. In this case $p$ is having exactly two distinct roots, and using the Scaling property, we consider
$$
p(z)=(z-1)^{k}(z+1)^{m},
$$
where $k, m \in \mathbb{N}$. Then the corresponding relaxed Newton map is
\begin{align*}
N_{h, p}(z) & =z-h \frac{z^{2}-1}{(k+m) z+(k-m)}  \nonumber\\
& =\frac{(k+m-h) z^{2}+(k-m) z+h}{(k+m) z+(k-m)}. \nonumber
\end{align*}
\begin{lem}[Characterization of quadratic relaxed Newton maps]
Let $R$ be a quadratic rational function having two attracting and one repelling fixed point. Then the following are true.
\begin{enumerate}[(i)]
\item If the multipliers of two attracting fixed points are the same, say $\lambda$, then $R$ is conjugate to $N_{h, p}$, where
$
h=1-\lambda \text { and } p(z)=z^{2}-1
$.
\item If one of the fixed points of $R$ is superattracting and the multiplier of another attracting fixed point is $\frac{n}{m}$, where $m\in \mathbb{N}$ and $n \in \mathbb{N}\setminus \{0\}$, then $R$ is conjugate to $N_{h, p}$, where $h=m-n$ and $p(z)=(z-1)^{m-n}(z+1)^{m}$.
\item If the ratio of the residues of the fixed point indices corresponding to the attracting fixed points of $R$ is $\frac{k}{m}$, where $k,m\in \mathbb{N}$, then there exists $h\in \mathbb{C}\setminus \{0\}$ such that $R$ is conjugate to $N_{h, p}$, where $p(z)=(z-1)^{k}(z+1)^{m}$.
 \end{enumerate}
\end{lem}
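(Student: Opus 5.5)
Throughout, the plan is to move the fixed points of $R$ to $1,-1,\infty$ and then appeal to Lemma~\ref{equal_rat}.

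\textbf{Normalization.} Since $R$ has degree two, it has three fixed points counted with multiplicity. Two of them are attracting and one is repelling, so all three have multiplier $\neq 1$ and are therefore simple. Choose a Möbius map $M$ sending the two attracting fixed points to $1$ and $-1$ and the repelling one to $\infty$. Then $\tilde R=M\circ R\circ M^{-1}$ is quadratic with simple fixed points $1,-1,\infty$ and the same multipliers as $R$. By Lemma~\ref{equal_rat}, it suffices in each case to find $k,m,h$ such that $N_{h,p}$, with $p=(z-1)^k(z+1)^m$, has multipliers at $1$ and $-1$ equal to those of $\tilde R$. The multiplier at $\infty$ then takes care of itself, for the following reason. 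For a quadratic map with three simple fixed points, the index relation
\[
\sum \frac{1}{1-\lambda_j}=1
\]
determines the third multiplier from the other two, and no multiplier equals $1$.

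\textbf{Multipliers of the model map.} From the computations above, $N_{h,p}$ has multiplier $1-h/k$ at $1$, multiplier $1-h/m$ at $-1$, and multiplier $\frac{k+m}{k+m-h}$ at $\infty$. The conditions are therefore $\lambda_1=1-h/k$ and $\lambda_{-1}=1-h/m$.

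\textbf{Case (i).} Take $k=m=1$ and $h=1-\lambda$. Both multipliers equal $\lambda$, and $p=(z-1)(z+1)=z^2-1$.

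\textbf{Case (ii).} Label the fixed points so that the superattracting one sits at $-1$, where we need $1-h/m=0$, i.e.\ $h=m$. At $1$ we need $1-h/k=n/m$. I would solve these for $h,k$ given the stated form of the multiplier. With $h=m$ the condition reads $1-m/k=n/m$, so the multiplier at $1$ should be written $\frac{k-m}{k}$. This suggests matching the exponents in the statement: with root multiplicities $m-n$ and $m$, the data must be arranged so that $h$ equals the multiplicity of the superattracting root and the other multiplier equals $1-h/(\text{other multiplicity})$. The main obstacle I expect is this bookkeeping of which root carries which multiplicity so that the formula $h=m-n$ comes out consistent. Concretely, I would place the superattracting point at the root of multiplicity $m-n$ (forcing $h=m-n$) and the other point at the root of multiplicity $m$, whose multiplier is then
\[
1-\frac{m-n}{m}=\frac{n}{m},
\]
as required.

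\textbf{Case (iii).} The residue index at a root of multiplicity $k$ is
\[
\frac{1}{1-(1-h/k)}=\frac{k}{h},
\]
so for $N_{h,p}$ the ratio of the indices at $1$ and $-1$ is $k/m$. Given $R$ with indices $\iota_1,\iota_{-1}$ in ratio $k/m$, set $h=k/\iota_1$; this is nonzero and finite, since $\iota_1\neq 0$. Then $\iota_{-1}=\frac{m}{k}\iota_1=m/h$, so both multipliers match, since the index determines the multiplier via (\ref{FPI_simple}). Lemma~\ref{equal_rat} then gives $\tilde R=N_{h,p}$, and hence $R$ is conjugate to $N_{h,p}$.
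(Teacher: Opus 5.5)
Your argument is correct, but it uses a different mechanism from the paper. Both proofs start with the same normalization: attracting points at $\pm1$, repelling point at $\infty$.

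From there the paper does not invoke Lemma~\ref{equal_rat}. It writes any such quadratic map explicitly as $R(z)=z-\frac{z^2-1}{Az+B}$ and computes $R'(\pm1)=1-\frac{2}{A\pm B}$. It then solves for $A,B$ in each of the three cases and rewrites the resulting formula directly as $z-h\frac{z^2-1}{(k+m)z+(k-m)}$.

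You instead match only the multipliers at $\pm1$, choosing $k,m,h$ from $1-h/k$ and $1-h/m$. You use the holomorphic index formula to force agreement at $\infty$, and conclude with the Buff--Henriksen rigidity lemma.

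The two routes buy different things:
\begin{itemize}
\item The paper's computation is elementary and self-contained. Its normal form automatically handles the repelling fixed point and the degree.
\item Your route treats all three cases uniformly. It is exactly the argument that extends to arbitrary degree (the paper's next lemma). It also shows explicitly, via the index theorem, why the multiplier at $\infty$ need not be checked, a point the paper's general lemma passes over silently.
\end{itemize}

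Two small points to tidy:
\begin{itemize}
\item To apply Lemma~\ref{equal_rat} you need $N_{h,p}$ to be genuinely quadratic with $\infty$ a simple fixed point, i.e.\ $h\neq 0$ and $h\neq k+m$. Both hold automatically here: $h=k+m$ would give multipliers $-m/k$ and $-k/m$ at $\pm1$, which cannot both be attracting.
\item In case (ii), your first labeling (superattracting point at $-1$) is the wrong one for $p=(z-1)^{m-n}(z+1)^m$. The superattracting point must be sent to $1$, the root of multiplicity $m-n$. This is a genuine root because $0<n/m<1$ forces $m>n$. Your subsequent ``concrete'' assignment is the right one; just make the normalization consistent with it.
\end{itemize}
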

\begin{proof}
Let $R$ be a quadratic rational function having three distinct fixed points, among them, one being repelling and the other two being attracting. Using conjugacy, we can consider $ 1 $ and $ -1 $ to be attracting, whereas $\infty$ is the repelling fixed point of $R$. Thus, $R$ has the following form:
\begin{equation}\label{form_R}
R(z)=z-\frac{z^{2}-1}{A z+B},
\end{equation}
where $A, B \in \mathbb{C}$. Then
\begin{equation}\label{mult_R}
R^{\prime}(1)=1-\frac{2}{A+B} \text { and } R^{\prime}(-1)=1-\frac{2}{A-B}.
\end{equation}
\begin{enumerate}
	\item If $R^{\prime}(1)=R^{\prime}(-1)=\lambda$ then $B=0$ and $\lambda=1-\frac{2}{A}$.
Therefore from (\ref{form_R}), we have
$$R(z) =z-\frac{z^{2}-1}{A z}=z-\frac{2}{A} \frac{z^{2}-1}{2 z} \\
=N_{h, p}(z),$$
where $p(z)=\left(z^{2}-1\right)$ and $h=\frac{2}{A}=1-\lambda$.
\item  Let $R^{\prime}(1)=0$
and $R^{\prime}(-1)=\frac{n}{m}$. Then we get $A+B=2$ and $A-B=\frac{2 m}{m-n}$.
Therefore, $A=\frac{2 m-n}{m-n}$ and $B=-\frac{n}{m-n}$.
Hence from (\ref{form_R}),
$$
\begin{aligned}
R(z) & =z-\frac{(m-n)\left(z^{2}-1\right)}{(2 m-n) z-n} \\
& =z-(m-n) \frac{z^{2}-1}{((m-n)+m) z+((m-n)-m)} =N_{h, p}(z),
\end{aligned}
$$
where $h=m-n$ and $p(z)=(z-1)^{m-n}(z+1)^{m}$.
\item  From (\ref{FPI_simple}), we have $\iota(1)=\frac{1}{1-R'(1)}$ and $\iota(-1)=\frac{1}{1-R'(-1)}$. Thus $\frac{\iota(1)}{\iota(-1)}=\frac{1-R^{\prime}(-1)}{1-R^{\prime}(1)}=\frac{k}{m}$. Following from part two of this proof, without loss of generality we can assume that $1<k<m$. Now it follows from (\ref{mult_R}) that
$\frac{A+B}{A-B}=\frac{k}{m}.$
Therefore, there exists $c \in \mathbb{C} \setminus \{0\}$ such that $A+B=2 k c$ and $A-B=2 m c$. Thus we get $A=(m+k) c$ and $B=(m-k) c$. This gives that $$R(z)=z-\frac{1}{c} \frac{z^{2}-1}{(k+m) z+(k-m)}=N_{h, p},$$
 where $h=\frac{1}{c}$ and $p(z)=(z-1)^{k}(z+1)^{m}$.
\end{enumerate}
This concludes the proof.
\end{proof}
Here is a generalized version.
\begin{lem}(Characterization of relaxed Newton's method)
Let $R$ be a rational function of degree $d\geq 2$ whose all but one of the fixed points are attracting and the other one is repelling. If there is a non-zero complex number $h$ such that the multiplier of every attracting fixed point $\alpha_i$, $i=1,2,\dots, d$, can be written as $\frac{m_i-h}{m_i}$ for some $m_i\in \mathbb{N}$, then for a suitable choice of a M\"{o}bius map $\varphi$, the rational function $R$ is the same as $N_{h,p}$, where $p(z)=\prod_{i=1}^{d}(z-\varphi(\alpha_i))^{m_i}$.
\end{lem}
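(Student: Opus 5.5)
Choose a Möbius map $\varphi$ sending the unique repelling fixed point of $R$ to $\infty$, and replace $R$ by $S=\varphi\circ R\circ\varphi^{-1}$. The statement is to be read with this normalization: "$R$ is the same as $N_{h,p}$" means $\varphi\circ R\circ\varphi^{-1}=N_{h,p}$. The plan is to compare the fixed-point data of $S$ and of $N_{h,p}$, where $p(z)=\prod_{i=1}^{d}(z-\varphi(\alpha_i))^{m_i}$, and then invoke Lemma~\ref{equal_rat}.

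\textbf{Step 1 (fixed points of $S$).} Since $R$ has degree $d$, it has $d+1$ fixed points counted with multiplicity. By hypothesis, $d$ of them are attracting and one is repelling, so none has multiplier $1$. Hence all fixed points are simple. Those of $S$ are $\beta_i=\varphi(\alpha_i)$, for $i=1,\dots,d$, and $\infty$. Multipliers are conjugacy invariants, so $S'(\beta_i)=\frac{m_i-h}{m_i}=1-\frac{h}{m_i}$. The $\beta_i$ are distinct and finite.

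\textbf{Step 2 (fixed points of $N_{h,p}$).} The finite fixed points of $N_{h,p}$ are exactly the distinct roots $\beta_i$ of $p$. By the computation in Section~\ref{Sec_RNM}, the root $\beta_i$ of multiplicity $m_i$ has multiplier $1-\frac{h}{m_i}$. The point $\infty$ is fixed with multiplier $\frac{D}{D-h}$, where $D=\sum m_i$. The degree of $N_{h,p}$ equals the number of distinct roots, namely $d$, so it too has $d+1$ fixed points. The finite ones have multipliers $1-h/m_i\neq1$ because $h\neq0$, and the multiplier at $\infty$ is not $1$ because $h\neq 0$. So all fixed points of $N_{h,p}$ are simple.

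\textbf{Step 3 (matching the multipliers at $\infty$).} The two maps now share the same fixed points, and the multipliers agree at every finite one. It remains to match the multipliers at $\infty$. I would use the holomorphic fixed point formula $\sum\iota=1$, together with (\ref{FPI_simple}), applied to both maps. For either map the finite fixed points contribute indices $\sum_i \frac{1}{1-(1-h/m_i)}=\sum_i\frac{m_i}{h}=\frac{D}{h}$. Hence the index at $\infty$ equals $1-\frac{D}{h}$ for both maps, and therefore so does the multiplier, since $\lambda=1-1/\iota$. (As a check, $1/(1-\frac{D}{D-h})=\frac{h-D}{h}$, which is consistent.)

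\textbf{Step 4 (conclusion and main obstacle).} Lemma~\ref{equal_rat} now gives $S=N_{h,p}$. The main obstacle is Step 3: the hypothesis says nothing about the repelling point, so its multiplier has to be forced through the index formula. A secondary subtlety is making sure the degrees, and hence the number of fixed points, match, so that Lemma~\ref{equal_rat} applies; the count in Steps 1 and 2 handles this.
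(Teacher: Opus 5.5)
Your proposal is correct and follows the same route as the paper: you move the repelling fixed point to $\infty$ by a M\"obius map $\varphi$, compare the fixed points and multipliers of $\varphi\circ R\circ\varphi^{-1}$ with those of $N_{h,p}$, and apply Lemma~\ref{equal_rat}; your Step~3, which uses the index formula to force the multipliers at $\infty$ to agree, makes explicit a point the paper's proof leaves unstated. The one check you leave tacit is $h\neq D$, which Step~2 needs for $\infty$ to be a fixed point of $N_{h,p}$ and for $\deg N_{h,p}=d$; it follows from the hypothesis, since $|h-m_i|<m_i$ gives $|h|<2\min_i m_i\le D$ (alternatively from Step~3, since the index $1-D/h$ of a simple fixed point cannot vanish).
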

\begin{proof}
If $\infty$ is the repelling fixed point of $R$, then consider $p(z)=\prod_{i=1}^{d}(z-\alpha_i)^{m_i}$. Therefore, $\alpha_i$ is a fixed point of $N_{h,p}$ with multiplier $1-\frac{h}{m_i}$. Therefore, the fixed points of $R$ and their respective multipliers are the same as that of $N_{h,p}$. Hence by Lemma~\ref{equal_rat}, we get $R=N_{h,p}$.
\par
Let $\beta\in \mathbb{C}$ be the repelling fixed point of $R$. We can choose a M\"{o}bius map $\varphi$ that takes $\beta$ to $\infty$ and then consider $S=\varphi\circ R\circ \varphi^{-1}$. The finite fixed points of $S$ are $\varphi(\alpha_i)$ with multipliers $\frac{m_i-h}{m_i}$. Consider $p(z)=\prod_{i=1}^{d}(z-\alpha_i)^{m_i}.$ Then from the above first part of this proof we get that $S=N_{h,p}$ and the proof is completed.
\end{proof}
This result can be reformulated as follows. Let $R$ be a rational function of degree $d\geq 2$ having exactly one repelling fixed point, all other fixed points are attracting. Suppose that the residue fixed point indices of $R$ at all but one of the fixed points are of the form $\frac{m_i}{h}$, where $m_i\in \mathbb{N}$ for $i=1,2,\dots, d$, and $h\in \{z:|z-m|<m\}$, where $m=\min\{m_i: i=1,2,\dots, d\}$. Then $R$ is conjugate to $N_{h,p}$ for some polynomial $p$.
\section{$h$-Convergent Classes: Proofs of Theorems~\ref{Conv_cls} \& \ref{Cubic_nconv}}\label{Sec_CC}
Since $\infty$ is the only repelling fixed point of $N_{h, p}$, by Theorem~\ref{shishikura} we get that $\mathcal{J}\left(N_{h, p}\right)$ is connected. However, for some polynomial $p$, $N_{h,p}$ can have non-repelling cycles, resulting in the successive iterations of some points not converging to a root of $p$. Our intention is to find classes of polynomials where this is not going to be true. In other words, we present some classes of polynomials where relaxed Newton's method is convergent. There are three such classes of polynomials in Theorem~\ref{Conv_cls}. We prove the theorem for these classes in the first three subsections. In the last subsection, we prove Theorem~\ref{Cubic_nconv}.
\subsection{Polynomials having exactly two roots:}
In this case, using the Scaling property, we can consider $p(z)=(z-1)^{k}(z+1)^{m}$, where $k\leq m$.
Here $h \in \mathbb{D}_{k}(k)$.
Note that $\deg(N_{h, p})=2$, and therefore, $N_{h, p}$ has exactly two critical points. As $-1$ and $1$ are attracting fixed points, their respective immediate basins $\mathcal{A}_{\pm 1}$ contain a critical point. Therefore, there are no attracting or parabolic cycles of $N_{h,p}$, and the Fatou set consists of the basins of $\pm 1$. Moreover, by the Riemann-Hurwitz formula we get that the immediate basins are completely invariant, and hence the Julia set is a Jordan curve.
\subsection{$p$ is uncritical:}
In this case, using the Scaling property, we consider $p(z)=z^{n}-1, n \geq 3$. Then
$p'(z)=n z^{n-1}$ and $ p''(z)=n(n-1) z^{n-2}$
so that $$N_{h, p}(z)=z-h \frac{z^{n}-1}{n z^{n-1}}=\frac{(n-h) z^{n}+h}{n z^{n-1}} \text{ and } N_{h, p}'(z)=\frac{(n-h) z^{n}-h(n-1)}{n z^{n}}.$$
Note that, by Theorem~\ref{rel_sym}, we have $\left\{z \mapsto \lambda z: \lambda^{n}=1\right\} \subseteq \Sigma N_{h, p}.$ The origin is a critical point of $N_{h, p}$ with multiplicity $(n-2)$. A non-zero critical point of $N_{h, p}$ is the solution of
$$
z^{n}=\frac{h(n-1)}{n-h}.
$$
As $\infty$ is a repelling fixed point, $\infty \in \mathcal{J}\left(N_{h, p}\right)$ and therefore $0 \in \mathcal{J}\left(N_{h, p}\right)$. All non-zero critical points are in the respective immediate basins corresponding to the roots of $p$. Therefore, $\mathcal{F}(N_{h,p})$ consists of the basins of roots of $p$.
\subsection{$p(z)=(a z+b)^{m}\left[(a z+b)^{n}+c\right]$ where $a, b, c \in \mathbb{C}$, $a, c \neq 0$ and $m, n \in \mathbb{N}$ with $n\geq 2$:}
Using the Scaling property, we can consider $a=1, b=0$, and $c=-1$. Thus we have $p(z)=z^{m}\left(z^{n}-1\right)$ and $
p'(z) =z^{m-1}\left[(m+n) z^{n}-m\right]
$.
Relaxed Newton's method applied to $p$ gives
$$N(z)= z-h\frac{ z(z^{n}-1)}{(m+n) z^{n}-m}= \frac{(m+n-h) z^{n+1}-(m-h) z}{(m+n) z^{n}-m}.$$
$$
\begin{aligned}
N^{\prime}(z)=&  {\left[((n+1)(m+n-h) z^{n}-(m-h))((m+n) z^{n}-m)\right.} \\
& \left.-((m+n-h) z^{n+1}-(m-h) z)(n(m+n) z^{n-1})\right] /((m+n) z^{n}-m)^{2} \\
= & {\left[(m+n)(m+n-h) z^{2 n}-(m(n+1)(m+n-h)+(m+n)(m-h)\right.} \\
& \left.-n(m+n)(m-h)) z^{n}+m(m-h)\right] /((m+n) z^{n}-m)^{2}.
\end{aligned}
$$
Thus the critical points of $N$ are the solutions of
\begin{equation}\label{crt_pt}
A z^{2 n}-B z^{n}+C=0,
\end{equation}
where
$A=(m+n)(m+n-h)$, $ B=2\left(m^{2}+m n-m h\right)+n^2h-n h $ and $ C=m(m-h). $ From (\ref{crt_pt}), we get that there are two sets of critical points, $S_{1}=\left\{z: z^{n}=\frac{B + \sqrt{B^{2}-4 A C}}{2A}\right\}$ and $S_{2}=\left\{z: z^{n}=\frac{B - \sqrt{B^{2}-4 A C}}{2A}\right\}$. Note that every element of each set is preserved under rotations about the origin of order $n$. Moreover, the following are true.
\begin{obs}\label{Prop_ReNew}
\begin{enumerate}
\item The Julia set of $N$ is preserved under rotations about the origin of order $n$, i.e., $\left\{z \mapsto \lambda z: \lambda^{n}=1\right\} \subseteq \Sigma N$.
\item Let $\mathcal{A}_{0}$ be the immediate basin of $0$. Then for each $\sigma \in \left\{z \mapsto \lambda z: \lambda^{n}=1\right\}$, we have $\sigma\left(\mathcal{A}_{0}\right)=\mathcal{A}_{0}$.
\item  If $\mathcal{A}_{1}, \mathcal{A}_{2}, \dots, \mathcal{A}_{n}$ are the immediate basins corresponding to the roots of $z^{n}=1$, then $\sigma\left(\mathcal{A}_{i}\right)=\mathcal{A}_{j}$ for $ i, j \in\{1,2, \dots, n\}$ and $ i \neq j$. Thus, $\left\{\mathcal{A}_{i}: i=1,2, \ldots, n\right\}$ is preserved under rotations about the origin of order $n$.
\end{enumerate}
\end{obs}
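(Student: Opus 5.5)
The three items all come from one fact: $N=N_{h,p}$ commutes with every rotation $\sigma(z)=\lambda z$, $\lambda^n=1$. I would prove this first and then read off each item.

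\textbf{Step 1: commutation.} From the explicit formula
$$N(z)=\frac{(m+n-h)z^{n+1}-(m-h)z}{(m+n)z^n-m}=z\,\frac{(m+n-h)z^{n}-(m-h)}{(m+n)z^n-m},$$
we see that $N(z)=z\,\psi(z^n)$ for a rational function $\psi$. Since $(\lambda z)^n=z^n$, this gives $N(\lambda z)=\lambda N(z)$, that is, $\sigma\circ N=N\circ\sigma$. Alternatively, $p(\lambda z)=\lambda^m p(z)$, so the Scaling property with $T=\sigma$ yields $N_{h,p}=\sigma^{-1}\circ N_{h,p}\circ\sigma$.

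\textbf{Step 2: item (1).} From $N^k\circ\sigma=\sigma\circ N^k$ for all $k$, normality of $\{N^k\}$ near $z$ is equivalent to normality near $\sigma(z)$, because $\sigma$ is a Möbius isometry of the sphere. Hence $\sigma(\mathcal F(N))=\mathcal F(N)$ and $\sigma(\mathcal J(N))=\mathcal J(N)$. Since $\sigma$ is a Euclidean isometry of the form $az$ with $|a|=1$, it lies in $\Sigma N$. One could also invoke Theorem~\ref{rel_sym}, since $z^m(z^n-1)$ is normalized when $n\ge 2$. However, it would then need checking that $\beta$ is a multiple of $n$, so the direct argument is cleaner.

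\textbf{Step 3: items (2) and (3).} Commutation gives $N^k(\sigma z)=\sigma N^k(z)$. So $\sigma$ maps the basin of a fixed root $\alpha$ onto the basin of $\sigma(\alpha)$. Being a homeomorphism, $\sigma$ also maps the component containing $\alpha$ onto the component containing $\sigma(\alpha)$, so $\sigma(\mathcal A_\alpha)=\mathcal A_{\sigma(\alpha)}$. With $\alpha=0$ and $\sigma(0)=0$ this gives $\sigma(\mathcal A_0)=\mathcal A_0$, which is item (2). Note that $0$ is attracting with multiplier $1-h/m$, since $h$ lies in the admissible disc.

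For item (3), take $\alpha$ an $n$th root of unity. Then $\sigma$ permutes the roots of $z^n=1$, so $\sigma(\mathcal A_i)=\mathcal A_j$ where $\sigma(\alpha_i)=\alpha_j$. When $\sigma\neq\mathrm{id}$, we have $j\ne i$ because multiplication by $\lambda\ne1$ has no fixed point among the nonzero roots. Hence the collection $\{\mathcal A_i\}$ is invariant.

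\textbf{Main point of care.} The statement ``$i\neq j$'' in item (3) holds only for nontrivial rotations, so I would phrase it that way. The only real content is Step 1, which is immediate from the formula.
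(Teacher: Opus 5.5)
Your proof is correct, and it reaches the result by a more self-contained route than the paper. The paper states this Observation without proof. For item (1) it relies, as it does explicitly in the unicritical case and in the proof of Theorem~\ref{Same_sym}, on two cited theorems. Since $p(z)=z^m(z^n-1)$ is normalized with $\beta=n$, Theorem~\ref{poly-symm} gives $\Sigma p=\{z\mapsto\lambda z:\lambda^n=1\}$. Theorem~\ref{rel_sym}, which rests on the Scaling property, then gives $\Sigma p\subseteq\Sigma N$. The check you were wary of is immediate, because $\beta=n$ exactly. Items (2) and (3) are left unargued in the paper.

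You instead verify $N\circ\sigma=\sigma\circ N$ directly from $N(z)=z\,\psi(z^n)$. This bypasses both cited theorems and does not need $n\ge 2$. It also gives slightly more than required: $\sigma$ carries the whole basin of $\alpha$ onto that of $\sigma(\alpha)$ and preserves multipliers.

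For (2) and (3) alone, the Julia-set invariance in (1) already suffices. A homeomorphism preserving $\mathcal{F}(N)$ permutes Fatou components. Each immediate basin $\mathcal{A}_\alpha$ is the Fatou component containing $\alpha$. So $\sigma(\mathcal{A}_\alpha)$ is the component containing $\sigma(\alpha)$, and this is presumably how the paper intends (2) and (3) to follow from (1).

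Your caveat that $j\neq i$ in (3) holds only for $\sigma\neq\mathrm{id}$ is a correct sharpening of the paper's loosely worded statement.
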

Since each immediate basin contains at least one critical point, $\bigcup\limits_{i=1}^{n} \mathcal{A}_{i}$ contains all critical points that are preserved under $n$th order rotations about the origin.
Thus, either $S_{1}$ or $S_{2}$ is in $\bigcup\limits_{i=1}^{n} \mathcal{A}_{i}$. Without loss of generality, let $S_{1} \subseteq \bigcup\limits_{i=1}^{n} \mathcal{A}_{i}$. This implies that $\mathcal{A}_{0}$ contains a critical point that belongs to $S_{2}$. From Observation~\ref{Prop_ReNew}(2) we have $S_{2} \subseteq \mathcal{A}_{0}$. Therefore, we get that the Fatou set
$\mathcal{F}(N)$ consists of the basins corresponding to the roots of $p$ and hence, $N$ is generally convergent for the family $(a z+b)^{m}\left[(a z+b)^{n}+c\right]$.

\begin{figure}[h!]
	\begin{subfigure}{.6\textwidth}
		\centering
		\includegraphics[width=0.78\linewidth]{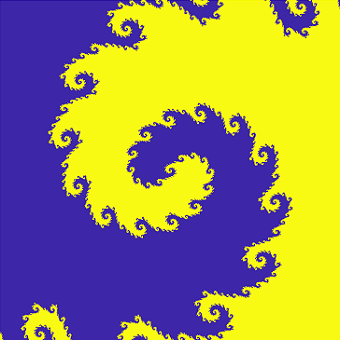}
		\caption{ $p(z)=z^2-1$, $h=(2+\pi i)/4$}
	\end{subfigure}
	\hspace{-2.0cm}
	\begin{subfigure}{.6\textwidth}
		\centering
		\includegraphics[width=0.78\linewidth]{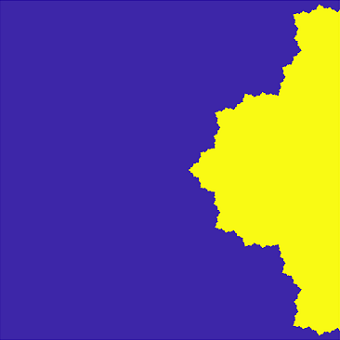}
		\caption{$p(z)=(z-1)(z+1)^2$, $h=1.5$}
	\end{subfigure}
	\caption{Relaxed Newton's method applied to polynomials with exactly two roots: The Julia set is the boundary of any two differently colored regions.}
	\label{JS-exact2}
\end{figure}
\begin{figure}[h!]
	\begin{subfigure}{.6\textwidth}
		\centering
		\includegraphics[width=0.78\linewidth]{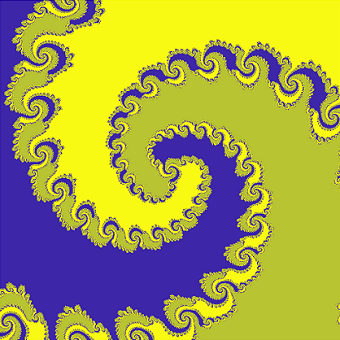}
		\caption{ $p(z)=z^3-1$, $h=(2+\pi i)/4$}
	\end{subfigure}
	\hspace{-2.0cm}
	\begin{subfigure}{.6\textwidth}
		\centering
		\includegraphics[width=0.78\linewidth]{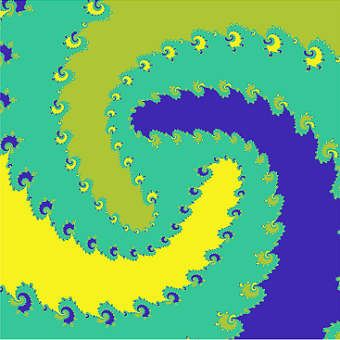}
		\caption{$p(z)=z(z^3-1)$, $h=(2+\pi i)/4$}
	\end{subfigure}
	\caption{Relaxed Newton's method applied to $z^3-1$ and $z(z^3-1)$: The Fatou set $\mathcal{F}(N_{h,p})$ is the union of basins corresponding to the roots of the respective polynomials}
	\label{JS-uni_ori}
\end{figure}
\begin{rem}\label{lcJS}
In all the above cases, we see that the resulting relaxed Newton's method is a geometrically finite map with connected Julia set. Thus, the Julia set is locally connected.
\end{rem}
Figs.~\ref{JS-exact2} and \ref{JS-uni_ori} show the dynamical plane of the relaxed Newton maps $N_{h,p}$, where each of the polynomials is a member of the $h$-convergent classes defined in
Theorem~\ref{Conv_cls}(1--3). 
\subsection{Proof of Theorem~\ref{Cubic_nconv}}
From Observation~\ref{Use_SP}(3) we consider a generic, non-unicritical polynomial as $p(z)=z^3-3z+a$, where $a\neq \pm 2$. Then 
$$N_{h,p}(z)=\frac{(3-h)z^3-3(1-h)z-ah}{3(z^2-1)},$$
$$N'_{h,p}(z)=\frac{(3-h)z^4-6z^2+2ahz+3(1-h)}{3(z^2-1)^2},$$ and
$$N^2_{h,p}(z)
=\frac{
	(3-h)\,\Phi(z)^3
	-27(1-h)\Phi(z)(z^2-1)^2
	-27ah\,(z^2-1)^3
}{
	9(z^2-1)\bigl(\Phi(z)^2-9(z^2-1)^2\bigr)
},$$
where $\Phi(z)=(3-h)z^3-3(1-h)z-ah.$

Therefore, a critical point $\xi$ of $N_{h,p}$ follows the equation
\begin{equation}\label{Crit_N}
(3-h)\xi^4-6\xi^2+2ah\xi+3(1-h)=0.
\end{equation}
We want to construct a $2$-periodic superattracting cycle of $N_{h,p}$ by computing $N^2_{h,p}(\xi)=\xi$. Note that a $2$-periodic point must be a solution of $N^2_{h,p}(z)-z$, which gives\\
$$
\begin{aligned}
&h(z^3-3z+a)((h^3-12h^2+45h-54)z^6-(6h^3-54h^2+153h-162)z^4+\\
&(2ah^3-15ah^2+18ah)z^3+(9h^3-54h^2+135h-162)z^2-\\
&(6ah^3-27ah^2+18ah)z+(a^2h^3-3a^2h^2-27h+54))=0.
\end{aligned} 
$$
As $\xi$ cannot be a root of $p$ (unless it would be a fixed point of $N_{h,p}$), we have
\begin{align}\label{2-p_sol}
&(h^3-12h^2+45h-54)\xi^6-(6h^3-54h^2+153h-162)\xi^4\nonumber\\&+(2ah^3-15ah^2+18ah)\xi^3+ (9h^3-54h^2+135h-162)\xi^2-\nonumber\\&(6ah^3-27ah^2+18ah)\xi+(a^2h^3-3a^2h^2-27h+54)=0
\end{align}
Now from (\ref{crt_pt}) we have
\begin{equation}\label{val_a}
a=\frac{(h-3)\xi^4+6\xi^2+3(h-1)}{2h\xi}.
\end{equation}
Thus, putting this value in (\ref{2-p_sol}), we get
$$9(h-3)(\xi^2-1)^3((h^2-8h+13)\xi^2-(h-1)^2=0.$$
Observe that, if $\xi^2=1$, then $a=\pm 2$, which is not possible. Thus $\xi =\pm \frac{h-1}{\sqrt{h^2-8h+13}}$, where the principal branch of the square root $\sqrt{h^2-8h+13}$ is considered. Consequently, $$a=\pm \frac{2(h^4-12h^3+57h^2-127h+108)}{h(h^2-8h+13)\sqrt{h^2-8h+13}}.$$
Thus, for any given $h$ there is a non-unicritical generic cubic polynomial whose relaxed Newton's method is not convergent. This completes the proof.

Consider $h=0.5$. Then for the polynomial $p(z)=z^3-3z+1834/(37\sqrt{37})$, the relaxed Newton map $N_{0.5,p}$ has a $2$-periodic superattracting cycle, namely $\{\frac{1}{\sqrt{37}},1.4765\}$. In Fig.~\ref{JS-notConv}, red color represents the basins of this cycle.
\begin{figure}[h!]
	\begin{subfigure}{.6\textwidth}
		\centering
		\includegraphics[width=0.754\linewidth]{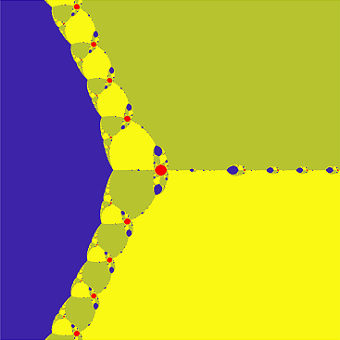}
		\caption{ $p(z)=z^3-3z+1834/(37\sqrt{37})$, $h=0.5$}
	\end{subfigure}
	\hspace{-2.0cm}
	\begin{subfigure}{.6\textwidth}
		\centering
		\includegraphics[width=0.806\linewidth]{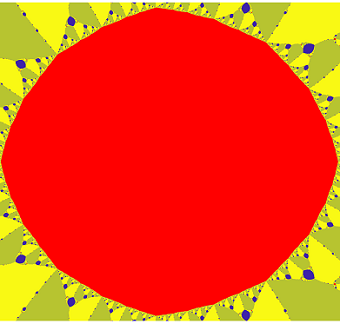}
		\caption{The immediate basin of $1/\sqrt{37}$}
	\end{subfigure}
	\caption{Non-convergent relaxed Newton's method: Red color represents the region where successive iterations of points do not converge to any of the roots of the polynomial}
	\label{JS-notConv}
\end{figure}
\section{Some Dynamical Remarks}\label{Sec_SDR}
\begin{proof}[Proof of Theorem~\ref{not_line}]
By Observation~\ref{Use_SP}(1), we may assume that $1$ and $-1$ are the two roots of $p$ with multiplicities $k$ and $m$, respectively, that is,
$$p(z)=(z-1)^k(z+1)^m.$$
	
We first show that $\mathcal{J}(N_{h,p})$ can be a line only if $k=m$.
The relaxed Newton map associated with $p$ is given by
$$N_{h,p}(z)= z - \frac{h(z-1)(z+1)}{(k+m)z+(k-m)}= \frac{(k+m-h)z^2+(k-m)z+h}{(k+m)z+(k-m)}.	$$
	
Consider the Möbius map
$$\varphi(z)=\frac{z+1}{z-1},$$
and define
$$S=\varphi\circ N_{h,p}\circ \varphi^{-1}.$$
By Theorem~\ref{Conv_cls}(1), $\mathcal{J}(N_{h,p})$ is a Jordan curve, and therefore $\mathcal{F}(N_{h,p})$ is the union of two completely invariant immediate basins, namely $\mathcal{A}_{\pm1}$. Note that $0$ and $\infty$ are attracting fixed points of $S$ with multipliers $1-\frac{h}{m}$ and $1-\frac{h}{k}$, respectively, while $1$ is a repelling fixed point of $S$ with multiplier $\frac{k+m}{k+m-h}$. Moreover, by \cite[Theorem~3.1.4]{Beardon_book}, we have
$\mathcal{J}(S)=\varphi\bigl(\mathcal{J}(N_{h,p})\bigr).$ Thus, $\mathcal{J}(S)$ is a Jordan curve, and $\mathcal{F}(S)$ contains exactly two components, immediate basins of $0$ and $\infty$, respectively.
	
Since a M\"obius map sends a line to either a line or a circle, and since $\infty$ is an attracting fixed point of $S$, the Julia set $\mathcal{J}(S)$ cannot be a line. Hence, if $\mathcal{J}(N_{h,p})$ is a line, then $\mathcal{J}(S)$ must be a circle. Since $0$ is an attracting fixed point of $S$, by K{\oe}nigs Linearization
Theorem (see \cite[Theorem 8.2]{Milnor_book}) there exists a conformal change of coordinates in a neighborhood
of $0$ which conjugates $S$ to the linear map $z\mapsto (1-\frac{h}{m}) z$. Therefore, the immediate basin of $0$ is locally invariant under radial contraction, and its boundary, namely $\mathcal{J}(S)$,
must also be invariant. This is possible only if $\mathcal{J}(S)$ is a
circle centered at $0$.
	
Note that $\frac{m-k}{m+k}$
is a pole of $N_{h,p}$. Moreover,
$$\varphi\!\left(\frac{m-k}{m+k}\right)=\frac{m-k+m+k}{m-k-m-k}=-\frac{m}{k}.$$
Thus, if $\mathcal{J}(S)$ is a circle, then its radius must be $\frac{m}{k}$.
	
Again, $\infty\in\mathcal{J}(N_{h,p})$ and $\varphi(\infty)=1.$
This implies that $1\in\mathcal{J}(S)$, which is possible only when $m=k$.
	
Now assume that $p$ has exactly two roots of equal multiplicity. By Lemma~\ref{equal_pwr}, it suffices to consider $p(z)=z^2-1.$
The relaxed Newton map associated with $p$ is given by
$$N_{h,p}(z)=\frac{(2-h)z^2+h}{2z}.$$
Note that $0$ is a pole of $N_{h,p}$ and hence belongs to $\mathcal{J}(N_{h,p})$. 
	
Again, consider the M\"obius map $\varphi(z)=\frac{z+1}{z-1}.$
A direct computation shows that
$$
\varphi\circ N_{h,p}\circ \varphi^{-1}(z)= z\left(\frac{z+a}{1+az}\right),\quad \text{where } a=1-h.$$
Hence, $\mathcal{J}(N_{h,p})$ is a line if and only if
$\mathcal{J}(\varphi\circ N_{h,p}\circ \varphi^{-1})$
is the unit circle.
Again, the unit circle is invariant under the M\"obius map
$z\mapsto \frac{z+a}{1+az}$ if and only if $a\in\mathbb{R}$, which is equivalent to $h\in\mathbb{R}$. This completes the proof.
\end{proof}
Fig.~\ref{JS-exact2} shows that $\mathcal{J}(N_{h,p})$ is not a line, although polynomials having exactly two roots are considered. In Fig.~\ref{JS-exact2}(a), we consider $m=n=1$ and $h$ is non-real; whereas, in Fig.~\ref{JS-exact2}(b), $h$ is taken to be real, but the roots of $p$ are with different multiplicities.

Another important aspect is whether the immediate basins of $N_{h,p}$ corresponding to the roots of $p$ are unbounded. This is trivial for the classical Newton's method. The proof relies on the fact that all multipliers of the fixed points of $N_{p}$ are real (see \cite[Proposition 6]{HSS2003} and \cite[Remark 3]{Felex1989}). We establish a general result on the unboundedness of invariant Fatou components for an arbitrary rational function having exactly one repelling fixed point at infinity.
\begin{lem}[Unboundedness of invariant immediate basins]
Let $R:\widehat{\mathbb C}\to\widehat{\mathbb C}$ be a rational map of degree $d\ge2$ such that:
\begin{enumerate}
\item $\infty$ is the only repelling fixed point of $R$;
\item every finite fixed point of $R$ is attracting (possibly superattracting);
\item either
 \begin{enumerate}[(i)]
	\item the Julia set $\mathcal{J}(R)$ is locally connected, or
	\item the argument of the multiplier of every attracting fixed point of $R$ is rational.
 \end{enumerate}
\end{enumerate}
Then every invariant Fatou component of $R$ is unbounded.
\end{lem}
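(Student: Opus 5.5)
The plan is to argue by contradiction: assume some invariant Fatou component $U$ is bounded, and produce a fixed point of $R$ on $\partial U\subset\mathbb{C}$. Such a point would be a finite fixed point in the Julia set, contradicting hypothesis 2.

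\textbf{Step 1: classify $U$.} By hypotheses 1 and 2, and since $\infty$ is repelling, no fixed point of $R$ has modulus-one multiplier. So the classification of invariant Fatou components leaves only three options for $U$: an immediate attracting basin $\mathcal{A}_\alpha$ of a finite fixed point $\alpha$, a Siegel disk, or a Herman ring. A Siegel disk needs an indifferent fixed point, so it is excluded. For Herman rings I would use Theorem~\ref{shishikura}: $R$ has exactly one weakly repelling fixed point, so $\mathcal{J}(R)$ is connected. Hence every Fatou component is simply connected, which rules out Herman rings and shows $U=\mathcal{A}_\alpha$ is simply connected.

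\textbf{Step 2: pass to a Blaschke model.} Let $\psi:\mathbb{D}\to U$ be a Riemann map with $\psi(0)=\alpha$. Then $B=\psi^{-1}\circ R\circ\psi$ is a finite Blaschke product of some degree $k$ with an attracting fixed point at $0$. Since $0$ is attracting, $B$ is not an automorphism, so $k\ge2$. Counting fixed points of $B$ on $\widehat{\mathbb{C}}$ gives $k+1$ in total: $0$, its reflection $\infty$, and therefore $k-1\ge1$ on $\partial\mathbb{D}$. Each of these boundary fixed points $\zeta$ is repelling for $B$.

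\textbf{Step 3a: case (i).} If $\mathcal{J}(R)$ is locally connected, then $\partial U$ is locally connected. By Carath\'eodory's theorem $\psi$ extends continuously to $\overline{\mathbb{D}}$ and semiconjugates $B$ to $R$ there. Thus $\psi(\zeta)\in\partial U$ is a fixed point of $R$. Since $U$ is bounded, $\psi(\zeta)$ is finite, which gives the contradiction.

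\textbf{Step 3b: case (ii).} Here I would use a landing argument instead of Carath\'eodory. Pick $w_0$ near $\zeta$ and a path $\eta_0$ in $\mathbb{D}$ from $w_0$ to $B(w_0)$. Using the local inverse branch of $B$ fixing $\zeta$, pull back to get $\eta_{j+1}$ with $B(\eta_{j+1})=\eta_j$, so the concatenation $\eta=\bigcup_j\eta_j$ converges to $\zeta$. Set $\gamma=\psi(\eta)$. Then $\gamma$ is a curve in $U$ with $R(\gamma_{j+1})=\gamma_j$.

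\begin{itemize}
\item The pieces $\gamma_j$ have bounded hyperbolic length in $U$ and escape every compact subset of $U$. Because $U$ is bounded, the comparison of hyperbolic and Euclidean metrics forces their Euclidean diameters to tend to $0$.
\item The accumulation set $L$ of $\gamma$ is then a connected compact subset of $\partial U$ on which $R$ acts as the identity. Indeed, for every $z\in L$, the points $z$ and $R(z)$ are limits of endpoints of the same small piece.
\item Fixed points of $R$ are isolated, so $L$ is a single point, and it is a finite fixed point of $R$ in $\mathcal{J}(R)$, again a contradiction.
\end{itemize}

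I expect the main obstacle to be this last step, namely controlling the Euclidean size of the pieces $\gamma_j$ and making the landing rigorous. This is where hypothesis (ii) should enter. When the argument of the multiplier is rational, the local dynamics near $\alpha$ has a finite rotational period, so after passing to an iterate $R^q$ one can work with internal rays that are genuinely invariant. I would first try to adapt the proof of Milnor's landing lemma for periodic rays (the Douady--Hubbard argument) to $R^q$ on $U$, applying it to $\psi$ composed with the Koenigs coordinate. If that adaptation fails, I would fall back on invoking directly the general result that invariant access curves in a simply connected basin land at fixed points.
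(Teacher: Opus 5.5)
Your main line is correct, and it takes a genuinely different route from the paper. It is also the more reliable of the two.

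\textbf{The paper's route.} The paper treats the K{\oe}nigs coordinate as a global conformal isomorphism $\phi:U\to\mathbb{D}$ with $\phi\circ R=\lambda\phi$. It uses rationality of $\arg\lambda$ to pick an $R$-fixed internal ray, lands the ray by Carath\'eodory in case (i) or by a Milnor-type ray-landing theorem in case (ii), and reads off the landing point as $\infty$ directly.

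\textbf{Your route.} You use the Riemann map and the Blaschke model $B$, take a repelling fixed point $\zeta$ of $B$ on $\partial\mathbb{D}$, and land a pulled-back invariant curve, arguing by contradiction. This buys rigor. Because $U$ contains a critical point, $R|_U$ has degree $k\ge 2$, which is exactly what $B$ records. So no conformal isomorphism $U\to\mathbb{D}$ can conjugate $R|_U$ to $w\mapsto\lambda w$. Even in that linear model, for $\arg\lambda=2\pi p/q$ with $q\ge 2$ the radial rays are only $q$-periodic, not fixed. Your Step~1 also supplies the classification that the paper skips when it asserts that $U$ contains a fixed point.

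\textbf{The obstacle you flag is not real.} Hypothesis~(ii) never enters Step~3b. Take a round disk $V$ about $\zeta$ on which the inverse branch $g$ fixing $\zeta$ is defined and $g(V)\subset V$. Since $B$ preserves $\mathbb{D}$, $\partial\mathbb{D}$ and the exterior, $g$ maps $W=V\cap\mathbb{D}$ into itself. So for $\eta_0\subset W$, Schwarz--Pick gives $\ell_{\mathbb{D}}(\eta_j)\le\ell_W(\eta_j)\le\ell_W(\eta_0)$, and $\psi$ transports this bound isometrically to $U$. With that line added, your bullets prove the lemma from hypotheses~1 and~2 alone. Case 3(i) is then subsumed, and Proposition~\ref{unbdd_imm} needs neither local connectivity nor $\arg(h)\in\mathbb{Q}$.

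\textbf{Drop the fallback.} Do not pursue the route through $R^q$ and the K{\oe}nigs coordinate. That coordinate is not injective on $U$. Moreover, an $R^q$-invariant curve only lands at a point of period dividing $q$, and hypotheses~1--2 do not forbid such points in $\mathbb{C}$.
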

\begin{proof}
Let $U$ be an invariant Fatou component of $R$.
Then $U$ contains a fixed point $z_0$ of $R$, which is attracting by hypothesis.
Hence $U$ is the immediate basin of attraction of $z_0$.
	
By K{\oe}nigs linearization, there exists a conformal isomorphism $\phi:U\to\mathbb D$
such that $\phi\circ R = \lambda\,\phi,$
where $\lambda=R'(z_0)$ and $|\lambda|<1$. For $\theta\in\mathbb R$, define the internal ray
$$\gamma_\theta=\phi^{-1}\{re^{i\theta}:0<r<1\}.$$
	
If $\arg(\lambda) \in 2\pi t$, where $t \in \mathbb Q$, (or if $z_0$ is superattracting), there exists at least one angle $\theta$ for which $\gamma_\theta$ is fixed by $R$.
Under the hypothesis 3\emph{(i)}, Carath\'eodory's theorem implies that $\phi$ extends continuously to $\partial U$, and hence every internal ray lands at a unique point of $\partial U$.
Under the hypothesis 3\emph{(ii)}, the fixed internal ray also lands at a unique point in $\partial U$. This follows from the standard ray-landing argument for rational angles (see, for example, \cite[Theorem 18.10]{Milnor_book}), which extends directly to immediate basins of rational maps.
	
Let $z^*\in\partial U$ be the landing point of such a fixed internal ray.
Then $R(z^*)=z^*$.
Since $z^*\in \mathcal{J}(R)$, the fixed point $z^*$ cannot be attracting.
By hypothesis, $\infty$ is the only repelling fixed point of $R$, and there are no parabolic fixed points.
Thus $z^*=\infty$.
	
Since a fixed internal ray is unbounded and lands at $\infty$, its immediate basin $U$ must be unbounded.
Therefore, every invariant Fatou component of $R$ is unbounded.
\end{proof}

\begin{rem}
The proof relies only on the existence and landing of at least one fixed internal ray.
Thus the hypotheses may be weakened by replacing \emph{3}(i) or \emph{3}(ii) with any condition guaranteeing unique landing of internal rays, such as local connectivity of $\partial U$ or semi-hyperbolicity.
\end{rem}	
In the case of the relaxed Newton map $N_{h,p}$ associated with a polynomial $p$, the multiplier at a finite fixed point depends explicitly on the relaxation parameter $h$.
In particular, it is sufficient to assume that $\arg(h)\in\mathbb Q$ to ensure the existence of a fixed internal ray, and hence, the unboundedness of invariant immediate basins follows from the above argument.
\begin{prop}\label{unbdd_imm}
If $\mathcal{J}(N_{h,p})$ is locally connected, or $\arg(h)\in\mathbb{Q}$, then all immediate basins of $N_{h,p}$ corresponding to the roots of $p$ are unbounded.
\end{prop}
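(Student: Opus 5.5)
The proposition follows by verifying that $N_{h,p}$ satisfies the three hypotheses of the preceding lemma on unboundedness of invariant immediate basins, and then noting that each immediate basin of a root is an invariant Fatou component.

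\emph{Hypotheses 1 and 2.} These are the fixed-point computations of Section~\ref{Sec_RNM}. The finite fixed points of $N_{h,p}$ are exactly the distinct roots of $p$. A root of multiplicity $m_i$ has multiplier $1-\frac{h}{m_i}$, which is attracting because $h\in\mathbb{D}_m(m)\subseteq\mathbb{D}_{m_i}(m_i)$. The point $\infty$ has multiplier $\frac{d}{d-h}$ with $|d-h|<d$, so it is the unique repelling fixed point. Since every fixed point of $N_{h,p}$ is simple, there are no parabolic fixed points. This is needed in the lemma's proof to force the landing point of a fixed internal ray to be $\infty$.

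\emph{Hypothesis 3.} If $\mathcal{J}(N_{h,p})$ is locally connected, we are in case 3(i) and can apply the lemma directly.

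Otherwise we assume $\arg(h)\in\mathbb{Q}$, understood as $\arg(h)\in 2\pi\mathbb{Q}$ to match the lemma. We must translate this into case 3(ii), which asks that $\arg\bigl(1-\frac{h}{m_i}\bigr)$ be a rational multiple of $2\pi$ for every root. This translation is the main obstacle. The argument of $1-\frac{h}{m_i}$ is not determined by $\arg h$ alone, so hypothesis 3(ii) does not follow literally.

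I would avoid this by going back to what the lemma's proof actually uses: a single internal ray in the linearizing coordinate that is invariant under $R$. The Koenigs coordinate $\phi$ conjugates $R$ to $w\mapsto\lambda w$ with $\lambda=1-\frac{h}{m_i}$. I would look instead for an invariant curve, namely a logarithmic spiral $\{w=e^{t(\log\lambda)}:t\in\mathbb{R}\}$ or a ray together with its iterates, which the lemma's argument already tolerates. Such a curve is forward invariant and tends to $0$. I would then argue that its other end accumulates on $\partial U$ at a fixed point. For this I would appeal to the landing theorem for invariant curves in basins, namely the Milnor Theorem~18.10-type argument cited in the lemma, which relies on Lipschitz control near the boundary.

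Since $\infty$ is the only non-attracting fixed point, the landing point must be $\infty$. Hence each immediate basin $\mathcal{A}_{z_i}$ contains a curve tending to $\infty$ and is unbounded.

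If the spiral approach is too delicate, the fallback is to state the rationality hypothesis directly on $\arg\bigl(1-\frac{h}{m_i}\bigr)$, as the paper's remark suggests, and derive it from $\arg h\in\mathbb{Q}$ in the cases where the two coincide, e.g.\ $h$ real. In that case each multiplier is real, so $\theta=0$ or $\pi$ gives a fixed or period-two ray, and passing to $R^2$ if necessary the lemma applies verbatim.
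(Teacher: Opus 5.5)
Your locally connected branch is exactly the paper's proof, which just invokes the lemma. Your diagnosis of the other branch is also correct. In your reading $\arg h\in 2\pi\mathbb{Q}$, this does not give hypothesis 3(ii): along a non-real ray $\arg h=\theta_0$, the argument of $1-h/m_i$ sweeps out an interval as $|h|$ varies. For instance, $h=(1+i)/3$ gives $1-h=(2-i)/3$, whose argument $-\arctan\frac{1}{2}$ is not a rational multiple of $\pi$. The paper's one-line justification passes over exactly this point.

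The gap is in your repair.
\begin{itemize}
\item The Koenigs coordinate conjugates $N_{h,p}$ to $w\mapsto\lambda w$ only near $z_i$. Since $U=\mathcal{A}_{z_i}$ contains a critical point, $N_{h,p}|_U$ has degree at least $2$, so no conformal $\phi:U\to\mathbb{D}$ with $\phi\circ N_{h,p}=\lambda\phi$ exists. This false premise is also in the lemma's proof, so your locally connected branch inherits it.
\item For generic $w_0$, the spiral $\{\lambda^t w_0\}$ can be lifted into $U$, and the lift leaves every compact subset of $U$. But the decisive step, that its far end accumulates at a single fixed point, is only asserted by analogy. The linearizing coordinate gives no control here: by Schwarz--Pick, the hyperbolic lengths in $U$ of the successive backward pieces are non-decreasing, not bounded.
\item Milnor's Theorem~18.10 gets bounded (in fact equal) lengths because a polynomial is a covering of the complement of its connected filled Julia set, hence a local hyperbolic isometry there. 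The map $w\mapsto\lambda w$ offers nothing comparable.
\item The fallback changes the statement. ``Passing to $R^2$'' is also unsafe: a period-two ray may land at a point of exact period two, and repelling $2$-cycles of $N_{h,p}$ are finite repelling fixed points of $N_{h,p}^2$, which violates hypothesis~1.
\end{itemize}

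The missing idea is to work with the Riemann map of $U$ instead of the Koenigs map. This proves the proposition for every admissible $h$ with neither extra hypothesis, and the lemma likewise needs only hypotheses 1 and 2.
\begin{itemize}
\item By Shishikura's theorem, $\mathcal{J}(N_{h,p})$ is connected (Section~\ref{Sec_RNM}), so $U$ is simply connected.
\item Let $\varphi:\mathbb{D}\to U$ be a Riemann map with $\varphi(0)=z_i$. Then $B=\varphi^{-1}\circ N_{h,p}\circ\varphi$ is a Blaschke product of degree $k\ge 2$ with attracting fixed point $0$. Its other $k-1\ge1$ fixed points therefore lie on $\partial\mathbb{D}$ and are repelling.
\item Take such a fixed point $\zeta$ and a small disk $V$ about it. The branch $g$ of $B^{-1}$ fixing $\zeta$ maps $W=V\cap\mathbb{D}$ into itself. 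For an arc $\gamma_0\subset W$ from $z$ to $g(z)$, this gives $\ell_{\mathbb{D}}(g^n(\gamma_0))\le\ell_W(g^n(\gamma_0))\le\ell_W(\gamma_0)$.
\item Hence the arcs $\varphi(g^n(\gamma_0))$ have uniformly bounded hyperbolic length in $U$ and tend to $\partial U$, so their spherical diameters tend to $0$.
\item Since $N_{h,p}$ maps each arc onto the previous one, the accumulation set of $\bigcup_n\varphi(g^n(\gamma_0))$ is a connected set of fixed points of $N_{h,p}$ in $\partial U\subset\mathcal{J}(N_{h,p})$. It therefore equals $\{\infty\}$, and $U$ is unbounded.
\end{itemize}
This is the standard argument of Przytycki (cf.\ \cite[Proposition~6]{HSS2003}).
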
	
If the Julia set is not locally connected and $\arg(h)\notin\mathbb Q$, then fixed internal rays need not exist or internal rays may fail to land.
In this setting, ray-based arguments are no longer available, and the unboundedness of immediate basins, if it holds, must be proved using different, more global methods (possibly  using quasi-conformal surgery). 

For $h=1$, the rigidity of Newton's method has been extensively studied, and the Julia set has been shown to be locally connected, with a few exceptions (see~\cite{DS2022}). The same question arises here: Under what conditions is $\mathcal{J}(N_{h,p})$ locally connected? We believe that this is true in general.

Since relaxed Newton's method satisfies the Scaling property, it follows from Theorem~\ref{rel_sym} that $\Sigma p\subseteq\Sigma N_{h,p}$, where $p$ is a normalized polynomial. Now consider a polynomial whose symmetry group contains at least one non-identity element. The natural question is when these two sets coincide. From the previous result, we see that for certain values of $h$, the Julia set $N_{h,p}$ is a line, and in such cases equality does not hold. However, an affirmative answer can be obtained once we exclude the situations in which $\mathcal{J}(N_{h,p})$ is a line.
\begin{proof}[Proof of Theorem~\ref{Same_sym}]
	Since relaxed Newton's method satisfies the Scaling property, it follows from Theorems~\ref{poly-symm} and \ref{rel_sym} that $\Sigma p=\{z\mapsto \mu z: \mu^n=1\}\subseteq\Sigma N_{h,p}$. For $m=0$, $n=2$, and real $h$, the Julia set of $N_{h,p}$ is a line. Following the hypothesis, we exclude this case. As $n\geq 2$, every element in $\Sigma N_{h,p}$ is a rotation about the origin only. Note that $N_{h,p}$ is convergent and its Julia set is locally connected (see Theorem~\ref{Conv_cls} and Remark~\ref{lcJS}). Therefore, by Proposition~\ref{unbdd_imm}, we get that every immediate basin of $N_{h,p}$ corresponding to the roots of $p$ is unbounded. Now, following \cite[Lemma 3.3]{Sym_dyn}, we conclude that these immediate basins are the only unbounded Fatou components of $N_{h,p}$.  Moreover, these are preserved under the rotations about the origin of order $n$. Therefore, $\Sigma N_{h,p}$ can contain at most $n$ elements, and hence, the symmetry groups of $p$ and $N_{h,p}$ are identical.
\end{proof}
Theorem~\ref{Same_sym} suggests a possible rigidity phenomenon relating the symmetry group of a polynomial and that of its relaxed Newton map. Motivated by this result, we propose the following conjectural direction.

Let $p$ be a normalized polynomial whose symmetry group $\Sigma p$ is non-trivial. We expect that for all parameters $h$,
$$
\Sigma p=\Sigma N_{h,p}
$$
holds whenever the Julia set $\mathcal{J}(N_{h,p})$ is not a line.

A possible approach toward this problem is through the analysis of unbounded Fatou components of $N_{h,p}$. In particular, it is natural to investigate whether the presence of a non-repelling periodic cycle of $N_{h,p}$ can lead to the existence of an unbounded Fatou component. If every unbounded Fatou component of $N_{h,p}$ is necessarily associated with an attracting fixed point corresponding to a root of $p$, then the global symmetry of relaxed Newton's method would be forced to coincide with that of the underlying polynomial.

This leads to the following natural question: \textit{Can the relaxed Newton map $N_{h,p}$ admit a non-repelling periodic cycle whose basin of attraction contains an unbounded Fatou component?} A negative answer would provide a crucial step toward establishing the conjectured equality of symmetry groups in greater generality.
\subsection*{Acknowledgements}The author acknowledges the support of the Indian Institute of Technology Madras through a Postdoctoral Fellowship (No.F.Acad./IPDF/R12/2025). He sincerely thanks his mentor, Prof. Saminathan Ponnusamy, for his careful revision of the manuscript.
\section{Declarations}
\subsection{Funding} Not applicable.

\subsection{Conflict of Interests}
The author declares that he has no conflict of interest regarding the publication of this paper.

\subsection{Data Availability Statement}
The author declares that this research is purely theoretical and is not associated with any data.

\subsection{Code availability} Not applicable.

\end{document}